\newtheorem{teo}{Theorem}[section]
\newtheorem{remark}{Remark}
\newtheorem{prop}{Proposition}[section]
\newtheorem{lem}[teo]{Lemma}
\newcommand{\VF}{W}
\newcommand{\tVF}{\widetilde{W}}
\newcommand{\cA}{{\mathcal A}}
\newcommand\R{{\mathbb R}}
\renewcommand\P{{\mathbb P}}
\newcommand\E{{\mathbb E}}
\newcommand\N{{\mathbb N}}
\newcommand\Z{{\mathbb Z}}
\definecolor{newgreen}{rgb}{0,0.6,0.3}
\newcommand\sig{{\sigma}}
\newcommand{\dx}{\Delta x}
\newcommand{\beqn}{\begin{equation}}
\newcommand{\eeqn}{  \end{equation}}
\newcommand{\beqno}{\begin{equation*}}
\newcommand{\eeqno}{  \end{equation*}}
\newcommand{\be}{\begin{eqnarray}}
\newcommand{\ee}{  \end{eqnarray}}
\newcommand{\beno}{\begin{eqnarray*}}
\newcommand{\eeno}{  \end{eqnarray*}}
\newcommand{\conv}{\rightarrow}
\numberwithin{equation}{section}
\begin{document}

\author[Athena Picarelli]{Athena Picarelli}
\address{Department of Economics, University of Verona, Via Cantarane 24, 37129, Verona, Italy}
\email{athena.picarelli@univr.it}
\author[Christoph Reisinger]{Christoph Reisinger}
\address{Mathematical Institute, University of Oxford, Andrew Wiles Building, OX2 6GG, Oxford, UK}
\email{christoph.reisinger@maths.ox.ac.uk}

\title[Duality-based a posteriori error estimates]{Duality-based \textit{a posteriori} error estimates for some approximation schemes for {optimal investment} problems}
\maketitle 

%{\fbox{Rewrite abstract and introduction at the end.}}
\begin{abstract}
We consider a Markov chain approximation scheme for utility maximization problems in continuous time, which uses, in turn, a piecewise constant policy approximation, Euler-Maruyama time stepping, and a Gau{\ss}-Hermite approximation of the Gau{\ss}ian increments.
The error estimates previously derived in \textit{A.~Picarelli and C.~Reisinger, Probabilistic error analysis for some approximation schemes to optimal control problems, arXiv:1810.04691} are asymmetric between lower and upper  bounds due to the control approximation and improve on known results in the literature in the lower case only.
In the present paper, 
we use duality results to obtain  \textit{a posteriori} upper error bounds which are empirically of the same order as the lower bounds.
The theoretical results are confirmed by our numerical tests.
\end{abstract}

% % % % % % % % % % % % % % % % % % % % % % % % % % % % % % % % % % % %
% % % % % % INTRODUCTION AND PRELIMINARY RESULTS % % % % % % % % % % % %
% % % % % % % % % % % % % % % % % % % % % % % % % % % % % % % % % % % % 

\section{Introduction}\label{sect:intro}
%Let $\left(\Omega, \mathbb F, \P\right)$ be a probability space with a filtration  $\left\{\mathbb F_t, t\geq0\right\}$ induced by a $\R^p$($p\geq 1$)-Brownian motion ${\mathcal{B}}(\cdot)$.
%We consider the following dynamics
%\begin{equation}\label{SDE}
%\left\{\begin{array}{l}
%dX(s)=X(s)\Big(r_s + \alpha_s\cdot(b_s-r_s\mathbbm 1)+g(s,\alpha_s)\Big)ds+X(s)\alpha_s\cdot\sigma_s d\mathcal B(s) \qquad s\in[t, T]\\
%X(t)=x,\end{array}\right.
%\end{equation}
%where the processes $r,b$ and $\sigma$ take values, respectively, in $\R,\R^d$ and $\R^{d\times p}$ and are adapted to the filtration $\left\{\mathbb F_t, t\geq0\right\}$. We assume that the control vector process $\alpha\in \cA$ (representing the percentage of wealth invested in each stock) takes values in a compact { and convex } set $A\subset \R^d$.
%For any $x\geq 0$, we will denote by $X^{\alpha}_{t,x}(\cdot)$ the strong solution of \eqref{SDE}.

%Given a utility function $U$, the aim of the investor is to maximize the quantity 
%$$
%J(t,x):=\E\big[U(X^{t,x,\alpha}_T)\big],
%$$
%where the utility function $U$ is assumed to be a bounded  continuously differentiable  concave and strictly increasing function in $(0,+\infty)$ (see (H3)) satisfying the Inada condition
%\begin{equation}\label{eq:Inada}
%\underset{x\to 0}\lim U'(x)=+\infty,
%\end{equation}

We study the numerical approximation of a class of optimal control problems for diffusion processes arising in financial applications.
It is well known that, under suitable assumptions, the associated value function can be characterized as the solution of a second order Hamilton-Jacobi-Bellman (HJB) partial differential equation.
To deal with the possible degeneracy of the diffusion component of the dynamics, it is in general  necessary to consider solutions in  the viscosity sense (see \cite{CIL92} for an overview). Furthermore, explicit solutions for this type of nonlinear equations are rarely available, so that their numerical approximation becomes vital.
In the framework of viscosity solutions,  the basic theory of convergence for numerical schemes is established in  \cite{BS91}.
The fundamental properties required are: monotonicity, consistency, and stability of the scheme.
While standard finite difference schemes are in general non-monotone, 
semi-Lagrangian (SL) schemes (see \cite{M89,CF95,DJ12}) 
are monotone by construction.
The basic scheme considered in this paper belongs to this family and has been previously analyzed in \cite{PicaReisi18_prob}.

{
We focus here on computable error bounds for the solution.
Many of the published error bounds for this kind of maximisation problem, including those in \cite{PicaReisi18_prob}, are asymmetrical in the sense that a more accurate lower bound can be given than the upper bound.
In this work, we construct an upper bound which consists of two additive contributions: a term which can be computed \textit{a priori} from the model parameters and is
of the same order in the mesh parameters as the known lower bounds; and a term which can be computed \textit{a posteriori} from the solution of the dual problem.
The practical value of this decomposition is that the second term is empirically (i.e., from our numerical tests) smaller than the first one, so that in practice we can compute rigorous error bounds \textit{a posteriori} which improve on the ones available \textit{a priori}. We discuss this in more detail below.

The machinery for \textit{a priori} bounds for HJB equations is now well-established.
}
By a technique pioneered by Krylov based on ``shaking the coefficients'' and mollification to construct smooth sub- and/or super-solutions,
\cite{K97, K00, BJ02, BJ05, BJ07} prove certain fractional convergence orders  significantly lower than one.  These results are mainly derived by  PDE techniques
and strongly rely on the comparison principle between viscosity sub- and super-solutions of the HJB equation and the consistency properties of the scheme. 
For the scheme considered in the present paper, 
%a new proof based on purely probabilistic arguments in \cite{PicaReisi18_prob} allowed improvements to one side of these errors. 
the probabilistic proof in  \cite{PicaReisi18_prob} exploits the fact that the numerical scheme is based on a discrete approximation of the optimal control problem,
specifically by a piecewise constant policy approximation, Euler-Maruyama time stepping, and a Gau{\ss}-Hermite approximation of the Gau{\ss}ian increments.
This yields the desired error bounds by a direct comparison between two value functions and leads to an improvement of the error contribution
of the second and third of these approximations by avoiding the use of the truncation error.
%An important step in order to define our scheme is to approximate the set of controls by piecewise constant ones. 
The piecewise constant policy approximation, however, introduces an asymmetry between the upper and the lower bound of the error and, as a result, the bounds in \cite{PicaReisi18_prob}  give only a partial improvement of the classical PDE-based results.

For the class of convex optimal control problems studied here, namely typical utility maximization problems arising in financial applications, we propose to overcome this issue using information coming from a dual problem. Indeed, an important part of the classical literature dealing with financial applications of optimal control theory
(see the seminal work of Kramkov and Schachermayer \cite{KraScha99}) applies duality techniques to solve utility maximization problems
under suitable convexity assumptions.
The basic idea of this method is to write the optimal control problem as a constrained optimization problem with respect to the state variable and then solve it by convex analysis techniques. 
A systematic approach to utility maximization problems admitting a dual formulation is discussed in \cite{Rogers}. 
Of these, the fairly general set-up of an optimal investment problem involving nonlinear dynamics given in \cite{CuoLiu00} will be explicitly analyzed in this paper. 
%We refer to the seminal work \cite{KraScha99} for the abstract theoretical {foundation} of duality theory.

More specifically, a direct application of the results in \cite{PicaReisi18_prob} to this problem gives one-sided (lower) error bounds for the considered Markov chain approximation of order 
\be\label{eq:intro_est}
h^{(M-1)/2M} + \Delta x^{(M-1)/(3M-1)}
\ee
for timestep $h$, spatial mesh size $\Delta x$ and number of Gau\ss ian points $M$, for Lipschitz viscosity solutions.
They coincide with the two-sided bounds in \cite{DJ12} for the standard linear-interpolation SL scheme, i.e.\ $M=2$, and improve them for $M>2$.
In contrast, the piecewise constant policy approximation introduces an extra term in the upper bound of order $h^{1/4}$
(from a recent result in \cite{JakoPicaReisi18}), which strictly restricts the order for $M>2$.

The main contribution of this paper is to analyse the error estimates in the case of optimal investment problems. Their special structure has neither been exploited by the classical literature on  PDE-based error estimates for HJB equations nor by the analysis in \cite{PicaReisi18_prob}. We prove that  for the class of  problems analyzed here,
two-sided \textit{a posteriori} bounds of the empirical order \eqref{eq:intro_est} can be obtained.
As a side result, we complete the literature by deriving explicit values for the constants appearing in the error estimates in terms of the Lipschitz (resp.\ H{\"o}lder) regularity of the coefficients and the solution in space (resp.\ time).

%\cred
The paper is organised as follows. 
In Section \ref{sec:setting}, we introduce the problem set-up and state our assumptions.
We define the scheme and give \textit{a priori} lower error bounds for the primal problem in Section \ref{sect:disc_scheme},
and both \textit{a priori} and \textit{a posteriori} upper bounds, by way of the dual problem, in Section \ref{sec:dual_problem}.
We illustrate the theoretical results by numerical tests in Section \ref{sec:num}, and offer conclusions and extensions in
Section \ref{concl}.
In Appendix \ref{app}, we derive explicit expressions for the constants in the error  bounds.

\section{Main assumptions and preliminary results}\label{sec:setting} 

Let $\left(\Omega, \mathbb F, \P\right)$ be a probability space with filtration  $\left\{\mathbb F_t, t\geq0\right\}$ induced by a
 $d$-dimensional Brownian motion $B$ and let $T>0$.
We consider a controlled (scalar) process governed by a dynamics of the following form, {for $t\in [0,T)$},
\begin{equation}\label{eq:SDE}
\left\{\begin{array}{l}
\mathrm{d} X_s=X_s\Big(r(s) + \alpha_s^\top(b(s)-r(s)\mathbbm 1) + g(s,\alpha_s)\Big)\,\mathrm{d}s+X_s\alpha_s^\top\sig(s) \,\mathrm{d} B_s, \qquad s\in(t, T)\\
X_t=x\geq 0,\end{array}\right.
\end{equation}
where $r, b, g$ and $\sig$ take values, respectively, in $\R, \R^d, \R$ and $\R^{d\times d}$
and $\mathbbm 1\equiv (1,\ldots,1)^\top\in\R^d$.
Denote further by $\mathcal A$ the set of control policies, i.e.\ progressively measurable processes $\alpha$ taking values in a given set $A\subseteq\R^d$ such that $\int^T_0|\alpha_s|^2\mathrm{d}s<+\infty$.
{
This framework has been introduced and studied in \cite{CuoLiu00}, and encompasses a number of important optimal investment problems involving nonlinear dynamics,
including the classical Merton problem \cite{Merton}, as special cases.}
In such models, the state $X_\cdot$ typically represents the wealth of an investor {with initial endowment $x$ at time $t$}.
The control vector $\alpha\equiv(\alpha_1,\ldots,\alpha_d)^\top$ then determines the proportion of wealth the investor puts in each stock. 
Here, the coefficient $r$ is the return rate of a bond (riskless asset), while $b(\cdot)\equiv (b_1(\cdot),\ldots, b_d(\cdot))^\top$ is the vector of the appreciation rates of the $d$ considered stocks with volatility matrix $\sig(\cdot)$.
%$$
%\int^T_0 |r(s) + \alpha_s^\top(b(s)-r(s)\mathbbm 1) + g(s,\alpha_s)|\mathrm{d} s + \int^T_0 |\alpha_s^\top\sig(s) |^2 \mathrm{d}s <+\infty.
%$$
The nonlinearity in the investment strategy introduced by the function $g$ models the effects of  market frictions and trading constraints on the wealth (see \cite{CuoLiu00, CuoCvi98, ElKarPengQue97}). We refer the reader to \cite{Rogers} for an overview of different  utility maximization problems, including \eqref{eq:SDE} and its special cases.  
We consider the following assumptions:
\begin{itemize}
\item[\bf{(H1)}] $A\subseteq \R^d$ is a {bounded and} convex set such that $0\in A$.
\item[\bf{(H2)}] $(i)$
 There exists $K_0\geq 0$ such that 
\begin{eqnarray*}
& &|r(t)-r(s)|+|b(t)-b(s)|+\|\sig(t)-\sig(s)\|\leq K_0  |t-s|^{1/2}\qquad\forall\; t,s\in [0,T].
\end{eqnarray*}
$(ii)$  $g:[0,T]\times A\to \R$ satisfies: 
\begin{itemize}
\item[-] there exists $K_1\geq 0$ such that
\begin{eqnarray*}
& & |g(t,a)-g(t,a')|\leq K_1 |a-a'|\qquad \forall a,a'\in A,t\in[0,T];\\
%& & \|g\|_\infty\leq K_1;\\
& &|g(t,a)-g(s,a)|\leq K_1 |t-s|^{1/2}\qquad\forall\; t,s\in [0,T], a\in A;
\end{eqnarray*}
\item[-] for each $t\in[0,T]$, $a\to g(t,a)$ is concave;
\item[-] $g(t,0)=0$ for all $t\in [0,T]$.
\end{itemize}
\item[\bf{(H3)}]
 $\sig$ satisfies a uniform ellipticity condition, i.e.\ there exists $\eta>0$ such that 
$$
\xi^\top\sig\sig^\top\xi\geq \eta|\xi|^2 \qquad \forall \xi\in \R^d.
$$
\end{itemize}
One has the following existence and uniqueness result:
\begin{lem}
Let assumptions (H1) to (H3) be satisfied. For any choice of the control $\alpha\in \cA$ and $x\geq 0$ there exists a unique strong solution to  equation \eqref{eq:SDE}. 
\end{lem}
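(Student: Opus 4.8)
The plan is to recognise that \eqref{eq:SDE} is a \emph{linear} (geometric) stochastic differential equation in the scalar unknown $X$, with random but bounded coefficients, so that existence and uniqueness follow either from the classical Lipschitz theory or, more explicitly, from the Dol\'eans--Dade exponential formula. Writing the drift and diffusion coefficients as $F(s,x)=x\,\mu_s$ and $G(s,x)=x\,\sig_s$, with $\mu_s := r(s)+\alpha_s^\top(b(s)-r(s)\mathbbm 1)+g(s,\alpha_s)$ and $\sig_s := \alpha_s^\top\sig(s)$, both are clearly progressively measurable and Lipschitz in $x$ with constants $|\mu_s|$ and $|\sig_s|$ respectively; the whole argument therefore reduces to controlling these two random coefficients uniformly in $s$ and $\omega$.

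The key step, and the only place where the hypotheses are genuinely used, is to verify that $\mu$ and $\sig$ are essentially bounded. Since $A$ is bounded by (H1), every admissible $\alpha\in\cA$ satisfies $|\alpha_s|\le C_A$ for all $s$, $\P$-a.s., where $C_A:=\sup_{a\in A}|a|<+\infty$ (this also shows the $L^2$ constraint in the definition of $\cA$ is automatic). The H\"older continuity in (H2)(i) makes $r,b,\sig$ bounded on the compact interval $[0,T]$; combining this with $g(s,0)=0$ and the Lipschitz bound in (H2)(ii) gives $|g(s,\alpha_s)|\le K_1|\alpha_s|\le K_1 C_A$. Hence $|\mu_s|$ and $|\sig_s|$ are bounded by a deterministic constant, uniformly in $s$ and $\omega$. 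Note that the ellipticity assumption (H3) plays no role in this lemma; it is needed later for the regularity of the value function.

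With this bound in hand I would conclude in either of two equivalent ways. The quickest is to invoke the standard existence and uniqueness theorem for It\^o SDEs with progressively measurable coefficients that are Lipschitz and of linear growth in the state variable uniformly in $(s,\omega)$ --- conditions which $F$ and $G$ satisfy by virtue of the boundedness just established --- yielding a unique strong solution in the usual $L^2$ sense. Alternatively, and more informatively, one can exhibit the solution explicitly as
\beqn
X_s = x\,\exp\!\left(\int_t^s\Big(\mu_u-\tfrac12|\sig_u|^2\Big)\,\mathrm d u + \int_t^s \sig_u\,\mathrm d B_u\right),
\eeqn
the integrals being well defined since the integrands are bounded, and verify by It\^o's formula that it solves \eqref{eq:SDE}. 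Uniqueness then follows because the difference $Z=X-Y$ of any two solutions obeys the linear equation $\mathrm d Z_s = Z_s\mu_s\,\mathrm d s + Z_s\sig_s\,\mathrm d B_s$ with $Z_t=0$, whence $Z\equiv 0$ by a standard Gr\"onwall estimate on $\E[|Z_s|^2]$. This representation has the pleasant by-product that $X_s\ge 0$ (indeed $X_s>0$ whenever $x>0$), consistent with the interpretation of $X$ as a wealth process.

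The main obstacle, such as it is, is therefore not analytical depth but the bookkeeping of the boundedness estimates: the result hinges entirely on the fact that (H1) forces the control to be bounded, which is what turns the multiplicative, control-dependent coefficients into uniformly Lipschitz data of linear growth. Once that observation is made, the statement is essentially classical.
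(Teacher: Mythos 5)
Your proof is correct and follows essentially the same route as the paper's: both exhibit the solution explicitly in exponential form, $X_s = x\exp\bigl(\int_t^s(\mu_u-\tfrac12|\alpha_u^\top\sig(u)|^2)\,\mathrm du + \int_t^s \alpha_u^\top\sig(u)\,\mathrm dB_u\bigr)$, with the stochastic integral well defined because (H1)--(H2) make the control-dependent coefficients bounded. You merely spell out what the paper leaves implicit --- the boundedness bookkeeping, the Gr\"onwall uniqueness argument, and the observation that (H3) is not actually needed here --- which is a welcome but not substantively different elaboration.
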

\begin{proof}
%The result easily follows observing that f
For $x>0$, {a} solution can be defined as $X_{\cdot} =\exp(Z_{\cdot})$, where
$$
Z_{\cdot} = z + \int^{\cdot}_t r(s) + \alpha_s^\top(b(s)-r(s)\mathbbm 1) + g(s,\alpha_s) -\frac{1}{2}(\alpha_s^\top \sigma)^2 \mathrm{d} s +\int^{\cdot}_t \alpha_s^\top\sig(s) \,\mathrm{d} B_s,
$$
for  $z=\log x$, which is well defined under assumptions (H1)-(H3) for any $\alpha\in\mathcal A$. Moreover, for $x=0$ the process $X\equiv 0$ is the unique solution to \eqref{eq:SDE} for any $\alpha\in\mathcal A$. %Uniqueness also follows by standard arguments.
\end{proof}
We denote  by  $X^{t,x,\alpha}_{\cdot}$ the unique solution of equation \eqref{eq:SDE}. 
%One can easily observe that for any $x>0$  also $X^{t,x,\alpha}_s > 0$ for all $s\in[t,T]$.
To simplify the notation, where no ambiguities arise, we will indicate the starting point $(t,x)$ of the processes involved as a subscript in the expectation, i.e. $\E_{t,x}[\cdot]$.

The value function $v:[0,T]\times [0,+\infty)\to\R$ of  the optimal control problem is defined by 
\begin{align}
\label{eq:value_v}
v(t,x):=\sup_{\alpha\in\cA}\,\E_{t,x}\big[U(X^\alpha_T)\big],
\end{align}
where  $U:[0,+\infty)\to\R$ is the so-called  utility function of the investor and it is assumed to satisfy the following assumptions:
\begin{itemize}
\item[\bf{(H4)}] 
 $U\in C^1((0,+\infty);\R)$;\\ 
$U$ is concave and  strictly increasing;\\
$\lim_{x\to+\infty} U'(x)=0.$
\end{itemize}

%\begin{itemize}
%\item[\bf{(H1)}] $A\subseteq \R^d$ is a compact and convex set such that $0\in A$.
%\item[\bf{(H2)}] $(i)$
% There exists $K_0\geq 0$ such that 
%\begin{eqnarray*}
%& &|r(t)-r(s)|+\|b(t)-b(s)\|+\|\sig(t)-\sig(s)\|\leq K_0  |t-s|^{1/2}\qquad\forall\; t,s\in [0,T].
%\end{eqnarray*}
%$(ii)$  $g:[0,T]\times A\to \R$ satisfies: 
%\begin{itemize}
%\item[-] there exists $K_1\geq 0$ such that
%\begin{eqnarray*}
%& & |g(t,a)-g(t,a')|\leq K_1 |a-a'|\qquad \forall a,a'\in A,t\in[0,T];\\
%& & \|g\|_\infty\leq K_1;\\
%& &|g(t,a)-g(s,a)|\leq K_1  |t-s|^{1/2}\qquad\forall\; t,s\in [0,T], a\in A;
%\end{eqnarray*}
%\item[-] for each $t\in[0,T]$, $a\to g(t,a)$ is concave;
%\item[-] $g(t,0)=0$ for all $t\in [0,T]$.
%\end{itemize}
%\item[\bf{(H3)}]
% $\sig$ satisfies the uniform ellipticity condition, i.e.\ there exists $\eta>0$ such that 
%$$
%\xi^T\sig\sig^T_t\xi\geq \eta|\xi|^2 \qquad \forall t\in[0,T],\xi\in \R^d.
%$$
%\item[\bf{(H4)}] $U$ is a Lipschitz continuous function with Lipschitz constant $L$. Moreover,\\ 
% $U\in C^1((0,+\infty);\R)$;\\ 
%$U$ is concave and  strictly increasing;\\
%$\lim_{x\to+\infty} U'(x)=0.$
%\end{itemize}
%\begin{remark}\label{rem:on_U}
%We point out that, as mentioned in \cite[Section 6.5]{Rogers}, the usual Inada condition 
%$$
%\lim_{x\to 0^+} U'(x)=+\infty
%$$
%(requested in \cite{CuoLiu00}) is not necessary for proving the main duality results.
%\end{remark}
%We can observe that for any $x>0$  also $X^{t,x,\alpha}_s > 0$ for all $s\in[t,T]$.
For any $[0,T-t]$-valued stopping time $\theta$,  $v$ satisfies the Dynamic Programming Principle (DPP) 
\begin{equation}\label{eq:DPP_v}
v(t,x)= \underset{\alpha\in \cA}\sup \;\E_{t,x}\Big[v(t+\theta,X^{\alpha}_{t+\theta})\Big],
\end{equation}
from which, at least formally, one can show that the Hamilton-Jacobi-Bellman (HJB) equation associated with the optimal control problem \eqref{eq:value_v} is
\begin{align}\label{HJB}
-v_t +\sup_{a\in A} \left( - x \left(r(t) + a^\top(b(t)-r(t)\mathbbm 1) + g(t,a)\right) v_x -\frac{1}{2}x^2 Tr[a(\sig\sig^\top)(t) a^\top] v_{xx} \right) & =   0
%v(T,x)  & =   U(x) && \qquad \text{in }\R^d.\nonumber
\end{align}
for $t\in [0,T)$, $x\geq 0$, completed with the terminal condition $v(T,x)=U(x)$ {for $x\geq 0$ (see \cite[Section 3.6.1]{Pham_book}). We refer the reader to \cite[Section 3, Chapter 4]{YZ} and the references therein for a complete overview on the dynamic programming approach to optimal control problems. \\
In the general case, $v$ is not expected to have sufficient regularity to satisfy the previous equation in the classical sense and even if \eqref{HJB} admits a classical solution, it is rarely found explicitly. To handle the problem in its full generality, the notion of viscosity solution is needed (see \cite{CIL92} for an overview). Indeed, under suitable assumptions, it can be proved (see for instance \cite[Theorems 5.2 and 6.1]{YZ}) that $v$ defined in \eqref{eq:value_v} is the unique continuous viscosity solution to \eqref{HJB} on $[0,T]\times [0,+\infty)$.

\section{ The numerical scheme}\label{sect:disc_scheme}
We consider here the scheme analyzed in \cite{PicaReisi18_prob}.  It belongs to the family of the so-called semi-Lagrangian (SL) schemes (see \cite{CF95, debrabant2013semi, Kry99, M89} for their earlier introdution)  which are based on discretization of the control set $\mathcal A$ and a Markov chain approximation of the associated optimal control problem. For completeness, we briefly discuss below the main features  of the scheme. We refer the reader to \cite{PicaReisi18_prob} for further details.

\subsection{Description of the scheme}\label{sec:descr_scheme}
We start by introducing a discretization in time.
Let $N\geq 1$,
$$
h=T/N \quad \text{and} \quad t_n=nh,
$$
for $n=0,\dots,N$.
The first step in our approximation is to introduce a time discretization of the control set. We consider  the set $\cA_h$ of controls $\alpha \in\cA$ which are constant in each interval $[t_n, t_{n+1}]$, for $n=0,\dots,N-1$, i.e.
$$
\cA{_h}:=\Big\{\alpha\in\mathcal A:  \alpha_s{(\omega)} \equiv \sum^{N-1}_{i=0} a_i \mathbbm 1_{s\in [t_i,t_{i+1})} 
{\; \forall \omega \in \Omega } \;  \text{ s.t. } a_i\in A, \quad i=0,\ldots, N-1 \Big\}.
$${
In what follows, we identify any element $\alpha\in \mathcal A_h$ by the sequence of random variables $a_i$ taking values in $A$ (denoted  by $a_i \in A$ for simplicity) and will write $\alpha\equiv (a_0,\ldots, a_{N-1})$.}
We denote by $v_h$ the value function obtained by restricting the supremum in \eqref{eq:value_v} to controls in $\cA_h$, that is 
\begin{equation}\label{eq:value_v_delta}
v_h(t,x):=\underset{\alpha\in\cA_h}\sup \E_{t,x}\left[U(X^{\alpha}_T)\right].
\end{equation}
Clearly, since $\cA_h\subseteq \cA$, one has
\be\label{eq:est_geq}
v(t,x)\geq  v_h(t,x),
\ee
for any $t\in[0,T]$, $x\geq 0$. {An upper bound of order $1/6$ for the error related to this approximation was first obtained by Krylov in \cite{Kry99}. Recently, this estimate has been improved to the order $1/4$ in \cite{JakoPicaReisi18}, so that one has % This would give %the estimate
\be\label{eq:est_krylov}
v(t,x)\leq  v_{h}(t,x)+Ch^{1/4}
\ee
for some constant $C\geq 0$. {We point out that the results in \cite{Kry99} and \cite{JakoPicaReisi18} require some additional assumptions on the coefficients and do not directly apply to problem \eqref{eq:SDE} to \eqref{eq:value_v}. It is possible that analogous estimates hold also in the setting of the present paper, but since we do not make use of \eqref{eq:est_krylov} here, we did not check this point in detail.
{Indeed, a main objective of the present paper is to by-pass the estimate \eqref{eq:est_krylov}, which turns out to be a bottleneck in the provable approximation order, while still using the piecewise constant policy approximation itself by building an approximation to $v_h$. The more important observation from \cite{JakoPicaReisi18} is therefore that a better order than 1/4 is not provable in the general case of Lipschitz viscosity solutions.
Then no matter how precise the estimates obtained for the error of the final approximation to $v_h$ are, without any further information the upper error bounds to $v$ cannot be more accurate than $O(h^{1/4})$. Section \ref{sec:dual_problem} will show how this term can be replaced by an expression which is computable from the dual problem and
%the use of the information coming from the definition of a dual problem can  %eliminate this asymmetry.
provides sharper bounds in our tests (see Section \ref{sec:num}).
}
}

% The numerical scheme we consider builds an approximation of $v_h$. Then, even admitting \eqref{eq:est_krylov} verified also in our setting, no matter how precise the estimates obtained on the overall error to $v_h$ are, without any further information the upper error bounds to $v$ cannot be better than $C h^{1/4}$. Section \ref{sec:dual_problem} will show how the use of the information coming from the definition of a dual problem can eliminate this asymmetry.

For any  given $\alpha\equiv(a_0,\ldots,a_{N-1})\in \mathcal A_{h}$, we consider 
the Euler-Maruyama approximation of the process $X^{t,x,\alpha}_\cdot$  given by the following recursive relation:
\begin{equation}\label{eq:def_M}
X_{t_{i+1}}= X_{t_{i}}  + h\, X_{t_{i}}  \left( r(t_i) +a_i ^\top (b(t_i) -r(t_i) \mathbbm  1) + g(t_i,a_i) \right) +  X_{t_i} a_i^\top \sig(t_i) \Delta B_i 
\end{equation}
for $i=0,\ldots, N-1$. The increments  
$\Delta B_i:= B_{t_{i+1}}- B_{t_i}$
are independent, identically distributed random variables such that
\be\label{eq:N_0_h}
\Delta B_i \sim \sqrt{h} \, \mathcal N(0,I_d)\qquad \qquad \forall i=0,\dots,N-1.
\ee
We denote  by  $\overline X^{t_n,x,\alpha}_{\cdot}$ the solution to \eqref{eq:def_M} with the control $\alpha\equiv(a_0,\ldots,a_{N-1})\in\cA_{h}$ and such that $\overline X^{t_n,x,\alpha}_{t_{n}}=x$. In the next step, we work towards a Markov chain approximation of $\overline X^{t_n,x,\alpha}_{\cdot}$.

Let us start for simplicity with the case $d=1$.
Let $M\geq 2$ and denote by $\{z_i\}_{i=1,\ldots, M}$ the zeros of the Hermite polynomial $H_{_M}$ of order $M$  and by $\{\omega_i\}_{i=1,\ldots ,M}$ the corresponding weights given by 
$$
\omega_i=\frac{2^{M-1} M!\sqrt{\pi}}{M^2 [H_{_{M-1}}(z_i)]^2},\qquad i=1,\ldots,M.
$$
With the definitions
$$
\lambda_i:=\frac{\omega_i}{\sqrt{\pi}}\quad\text{and}\quad \xi_i:=\sqrt{2}z_i, \qquad i=1,\ldots, M,
$$
one can make use of the following approximation (see, e.g., \cite[p.~395]{Hildebrand56})
\begin{align}\label{eq:GH_approx}
 \int^{+\infty}_{-\infty} f(y) \frac{e^{-\frac{y^2}{2}}}{\sqrt{2\pi}}dy
 %= \int^{+\infty}_{-\infty} f(\sqrt{2}y) \frac{e^{-y^2}}{\sqrt{\pi}}dy
 \approx 
 %\sum^M_{i=1} \frac{1}{\sqrt{\pi}}\omega_i f(\sqrt{2}z_i)=
 \sum^M_{i=1} \lambda_i f(\xi_i),
 \end{align}
which holds  for any smooth real-valued function $f$ (say $f$ at least $C^{2M}$). 
 Observing that $\lambda_i\geq 0, \forall i=1,\ldots,M$, and $\sum^M_{i=1} \lambda_i=1$, given the  sequence $\{\zeta_n\}_{n=0,\ldots,N-1}$ of i.i.d.\ random variables  such that for any $n=0,\ldots,N-1$
$$
  \P(\zeta_n=\xi_i)=\lambda_i,\quad i=1,\dots,M,
$$ 
 one has 
  $$
 \E[\zeta_n]=0\qquad\text{and}\qquad\text{Var}[\zeta_n]=1\qquad\forall n=0,\ldots,N-1.
 $$
\begin{center}
\begin{table}
\floatbox[{\capbeside\thisfloatsetup{capbesideposition={right,top},capbesidewidth=11.5cm}}]{figure}[\FBwidth]
{\caption{Analytical expressions of  $\{(\xi_i,\lambda_i)\}_{i=1,\ldots,M}$ for $M=2,3,4$. We refer to \cite[p.\ 464]{Beyer87} for numerical approximations of $\{(z_i,\omega_i)\}_{i=1,\ldots,M}$ for larger $M$.}\label{tab:lambda_xi}}
{\begin{tabular}{|c||c|c|}
\hline
 &  $\xi_i$ & $\lambda_i $\\
\hline  
\hline
$M=2$  & $\pm 1$ &  $1/2$ \\ 
\hline 
$M=3$  & 0 & $2/3$ \\ 
& $\pm\sqrt{3}$ & $1/6$\\
\hline
$M=4$  & $\pm \sqrt{3-\sqrt{6}}$  &  $(3+\sqrt{6})/12$  \\ 
& $\pm \sqrt{3+\sqrt{6}}$  &   $(3-\sqrt{6})/12$ \\
\hline
\end{tabular}
\hspace{-1.5 cm}
}
\end{table}
\end{center}
For any control $\alpha\equiv(a_0,\ldots,a_{N-1})\in\cA_{h}$, we will denote by $\widehat X^{t_n,x,\alpha}_\cdot$ 
 the Markov chain approximation of the process $\overline X^{t_n,x,\alpha}_\cdot$ , i.e.
 \be\label{eq:def_Xh}
 \begin{cases}
 \widehat  X_{t_n} \hspace{-0.3cm}& =x,\\
\widehat  X_{t_{i+1} } \hspace{-0.3cm} & =\widehat X_{t_{i}}  + h\, \widehat X_{t_{i}}  \left( r(t_i) +a_i  (b(t_i) -r(t_i) ) + g(t_i,a_i) \right) + \sqrt{h}\,\widehat  X_{t_i} a_i \sig(t_i) \,\zeta_i,  
  \end{cases}
 \ee
 for $i=n,\ldots,N-1$.

Applying to \eqref{eq:DPP_v} with $\theta = h$ the piecewise control approximation, the Euler-Maruyama discretization and the Gau{\ss}-Hermite quadrature formula \eqref{eq:GH_approx}, we obtain the following recursive semidiscrete approximation (i.e., discrete in time, continuous in space) of the value function 
 \small
 \begin{equation}
 \label{eq:SL_V}
\begin{cases}
 V(t_n,x)&=\underset{a\in A}\sup\; \sum^M_{i=1}\lambda_{i} V\left(t_{n+1},x  + h\, x \left( r(t_n) +a (b(t_n) -r(t_n) ) + g(t_n,a) \right) + \sqrt{h}\,x\, a \sig(t_n) \,\xi_i\right)\\
  &=\underset{a\in A}\sup\;\E_{t_n,x}\left[ V(t_{n+1},\widehat X^{a}_{t_{n+1}})\right], \hfill n=N-1,\ldots,0, \\
 V(t_{_N},x)&= U(x).
\end{cases}
 \end{equation}
 \normalsize
Iterating gives the following representation formula for $V$:
$$
V(t_n,x)=\underset{\alpha\in\mathcal A_{h}}{\sup}\E_{t_n,x}\left[U(\widehat X^{\alpha}_T)\right].
$$
\normalsize

%\cgreen{\textit{Shorten multi-dim case and refer to our other paper.}}
In the case of $d>1$, it is possible to extend  formula \eqref{eq:GH_approx} by a tensor product approximation as discussed in \cite{PicaReisi18_prob}.

We introduce now a discretization of the space variable. 
Let ${\Delta x>0}$ and consider the space grid  $\mathcal G_{\Delta x}:=\{x_m = m \Delta x : m\in{\Z}\}$. We also write $\mathcal G^+_{\Delta x}:=\{x_m = m \Delta x : m\in{\N}\}$.
Let $\mathcal I[\cdot]$ denote the linear interpolation operator {with respect to the space variable}, 
satisfying for every Lipschitz function $\phi$ (with Lipschitz constant $L_\phi$):
\be\label{eq:interp}
\begin{cases}
\mbox{$(i)$   $\mathcal I[\phi](x_m)=\phi(x_m)$, $\forall m\in\Z$,}\\
\mbox{$(ii)$   $|\mathcal I[\phi](x)-\phi(x)| \leq L_\phi \dx$,}\\
\mbox{$(iii)$  $|\mathcal I[\phi](x)-\phi(x)| \leq C \dx^2 \|D^{2} \phi\|_\infty$ \ if  $\phi\in C^2(\R)$,}\\
\mbox{$(iv)$ for any functions $\phi_1,\phi_2:\R\conv\R$, $\phi_1\leq \phi_2$ $\Rightarrow$ $\mathcal I[\phi_1]\leq \mathcal I[\phi_2]$.}
\end{cases}
\ee

We define an approximation $\VF$ on this fixed grid as follows:
 \begin{equation}
 \label{eq:SL_V_fully}
\hspace{-0.3cm}\begin{cases}
 \VF(t_n,x_m)= &\hspace{-0.3cm}\underset{a\in A}\sup\;\sum^{{ M}}_{i=1} { \lambda_i}\; \mathcal I[\VF]\Big(t_{n+1},x_m  + h\, x_m \left( r(t_n) +a^\top (b(t_n) -r(t_n) \mathbbm  1) + g(t_n,a) \right)  \\
 & \hspace{4.2 cm}+ \sqrt{h}\,x_m\, a^\top \sig(t_n) \, \xi_i\Big),\\
 \VF(t_{_N},x_m)=& \hspace{-0.3cm}U(x_m),
\end{cases}
 \end{equation}
 for $n=N-1,\ldots,0$ and $m\in \N$.
{We will refer to this as the fully discrete scheme.}
For $M=2$, the scheme coincides with the one introduced by Camilli and Falcone in \cite{CF95}. However, as explained in the next section, the error estimates derived in the present paper improve the state of the art for this class of schemes only when $M>2$ is considered.
 
 %From the properties of linear interpolation it follows that for every $x\in \mathbb{R}$ there exist $q_k(x)\ge 0$, $k\in \Z$ with $\sum_k q_k(x)=1$ and
% $|\{k: q_k > 0\}| \le 2$ such that
% \[
% \mathcal I[\phi](x) = \sum_{k\in\Z} q_k(x) \phi(x_k).
% \]
% From this, \eqref{eq:SL_V_fully} gives
%\[
%\VF(t_n,x_m)= \underset{a\in A}\sup\;\sum_{k\in\Z} {\hat \lambda_{m,k}(t_{n},a)}
% \; \VF(t_{n+1},x_k)
% \]
% with  $\hat \lambda_{m,k}(t_{n},a) := \sum_{i=1}^{{ \hat M}} {\hat \lambda_i}\; q_k\big(x_m+\mu(t_n,x_m,a)\, h+ \sig(t_n,x_m, a)\,{\hat \xi_i}\big) \ge 0$ and
%$\sum_k \hat \lambda_{m,k}(t_{n},a) =1$.
  
% Therefore, $\hat \lambda_{m,k}(t_{n},a)$ are interpretable as transition probabilities of a controlled Markov chain with state space $\mathcal G_{\Delta x}$ and
 %the number of nonzero transitions from each node bounded by $|\{k: \hat \lambda_{m,k}(t_{n},a)>0\}| \le 2\ell$. 

\subsection{An \textit{a priori} lower bound for $v$}

Under suitable assumptions,  \textit{a priori} estimates of the following form are proven in \cite{PicaReisi18_prob}:
\be\label{eq:error_asym}
\begin{split}
\hspace{-0.5cm} -C \left(h^{(M-1)/2M} + \frac{\Delta x}{h}\right)\leq v(t_n,x_m) - \VF(t_n,x_m) \leq   C \left(h^{1/4} +  h^{(M-1)/2M} +\frac{\Delta x}{h}\right)
\end{split}
\ee
 for any $n=0,\ldots, N$, $m\in \N$ and  a constant $C$, possibly depending on $x_m$
(the dependency of $C$ on $x_m$ can be explicitly derived and one has $C\leq C_0 (1+x_m^{2M})$ for some constant $C_0$).
{In particular, we make for now the assumption that $U$ is Lipschitz continuous.
We will discuss how to obtain bounds for some non-Lipschitz $U$ in subsection \ref{subsec:non-Lip}.}

The \textit{a priori} bounds are obtained by a direct comparison between the optimisation problems \eqref{eq:value_v_delta} and \eqref{eq:SL_V_fully}.
Contributing to the error estimates above 
are:
the Euler-Maruyama error of order $h^{1/2}$; the Gau\ss-Hermite quadrature error of order $h^{M-1}$; and the interpolation error of {order $\Delta x$,
accumulated over $1/h$ steps to $\Delta x/h$}. The bounds \eqref{eq:error_asym} are then the result of the use of \eqref{eq:est_geq} and \eqref{eq:est_krylov}  for the piecewise constant controls approximation, which introduces the aforementioned asymmetry in the estimates (given by the term $h^{1/4}$ in the right-hand side of \eqref{eq:error_asym}), and  of a regularization procedure, the so-called ``shaking coefficients'' technique in \cite{K97} and subsequent works, which is the classical tool to deal with nonsmooth solutions.

%In the analysis of \cite{PicaReisi18_prob}, Lipschitz continuity of the utility function is required. This property is not satisfied by very common utility functions found in literature (for instance the typical power utility $U(x)= x^p/p$ with $p\in (0,1)$).  
%For this reason, we introduce a further approximation of the problem and consequently have to estimate an additional error contribution.
%%However, this will not strongly affect our result, as we outline now. \\
%Letting $\rho,c_0>0$ and $x_\rho=c_0/\rho$, $y_\rho=\rho$, we define 
%$$
%U_\rho(x):=\left\{\begin{array}{ll}
%U(0)+\frac{ U(x_\rho)-U(0)}{x_\rho}x& \text{if }\quad 0 \leq x \le x_\rho, \\
%U(x) & \text{if }\quad x_\rho < x \leq y_\rho, \\
% U(y_\rho) & \text{if }\quad x > y_\rho,
%\end{array}
%\right.%\qquad \forall y\geq 0.
%$$
%so that $U_\rho$ is Lipschitz with Lipschitz constant $L_\rho :=(U(x_\rho)-U(0))/x_\rho$ and $U_\rho \to U$ as $\rho \to +\infty$. We denote by $V_\rho$ and $\VF_\rho$ the numerical approximation recursively obtained by \eqref{eq:SL_V} and \eqref{eq:SL_V_fully}, respectively, replacing $U$ with $U_\rho$.

Adapting the arguments of \cite{PicaReisi18_prob} to the present problem, we can obtain the following \textit{a priori} estimate for the lower bound of the quantities $v-V$ and  $v-\VF$:
\begin{prop} \label{prop:lower}
Let  assumptions (H1) to (H3) be satisfied { and let the function $U$ be Lipschitz continuous with Lipschitz constant $L\geq 0$} . Then, there exists a constant $C\geq 0$ such that for any $n=0,\ldots,N$, $x\geq 0$
\be\label{eq:est_lower_first}
v(t_n,x)
\geq V(t_n,x) - L C (1+x^{2M}) h^{(M-1)/2M}, %\Big(\vare^{1-2M} h^{M-1}+h^{1/2}+\vare\Big)
\ee
and for any  $n=0,\ldots,N$, $m\in \N$
\be\label{eq:est_lower_fully}
 v(t_n,x_m)
\geq  \VF(t_n,x_m) %- L C (1+x_m^{2M}) \left(h^{(M-1)/2M}  + \Delta x^{(M-1)/(3M-1)}\right).
 - L C (1+x_m^{2M}) \left(h^{(M-1)/2M}  + \Delta x/h \right).
\ee
\end{prop}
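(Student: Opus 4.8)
\emph{Proof sketch (plan).} The plan is to deduce both inequalities from the trivial bound $v\geq v_h$ of \eqref{eq:est_geq}: writing $v-V=(v-v_h)+(v_h-V)\geq v_h-V$, it suffices to bound the pure \emph{discretisation} error $V-v_h$ from above. Since $v_h$ and $V$ are suprema over the \emph{same} control set $\cA_h$ of $\E_{t_n,x}[U(X^\alpha_T)]$ and $\E_{t_n,x}[U(\widehat X^\alpha_T)]$ respectively, the elementary inequality $\sup_\alpha f_\alpha-\sup_\alpha g_\alpha\leq\sup_\alpha(f_\alpha-g_\alpha)$ reduces the whole task to the single weak estimate
\[
\sup_{\alpha\in\cA_h}\big(\E_{t_n,x}[U(\widehat X^\alpha_T)]-\E_{t_n,x}[U(X^\alpha_T)]\big)\leq LC(1+x^{2M})\,h^{(M-1)/2M}.
\]
I would then split this difference along the two intermediate objects of Section \ref{sect:disc_scheme}, namely the Euler--Maruyama process $\overline X^\alpha$ of \eqref{eq:def_M} and the Markov chain $\widehat X^\alpha$ of \eqref{eq:def_Xh}, via $X^\alpha\to\overline X^\alpha\to\widehat X^\alpha$.

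The first comparison, $\E[U(X^\alpha_T)]-\E[U(\overline X^\alpha_T)]$, is handled by a \emph{strong} estimate: $\overline X^\alpha$ is driven by the same Brownian increments $\Delta B_i$ as $X^\alpha$, so that, $U$ being Lipschitz, the error is at most $L\,\E_{t_n,x}\big[|X^\alpha_T-\overline X^\alpha_T|\big]$, which by the standard $O(h^{1/2})$ strong convergence of Euler--Maruyama (using (H2) for the $1/2$-H\"older time regularity and the multiplicative structure of \eqref{eq:SDE}) is $O(h^{1/2})$ with a polynomial-in-$x$ constant dominated by the weight $(1+x^{2M})$; crucially this step needs no regularisation. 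The second comparison, $\E[U(\overline X^\alpha_T)]-\E[U(\widehat X^\alpha_T)]$, involves two processes with \emph{identical} (time-frozen) coefficients differing only through their noise --- Gau\ss ian $\Delta B_i$ versus the discrete $\zeta_i$ whose law matches all moments up to order $2M-1$. This is the only genuinely weak, and hence the delicate, term: because $U$ is merely Lipschitz, I would first mollify at a scale $\veps>0$ (the classical regularisation playing the role of the ``shaking coefficients'' device), replacing the relevant value function by a smooth $\phi^\veps$ with $\|\phi^\veps-\phi\|_\infty\lesssim L\veps$ and $\|D^{k}\phi^\veps\|_\infty\lesssim L\veps^{1-k}$, and then swap the increments one step at a time (a Lindeberg-type telescoping). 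Each swap contributes exactly the Gau\ss--Hermite quadrature error of \eqref{eq:GH_approx}, which --- the rule being exact on polynomials of degree $\leq 2M-1$ --- is $O\big(h^{M}\|D^{2M}\phi^\veps\|_\infty\big)$ per step; the multiplicative dynamics make each increment proportional to the current state and thereby attach the weight $x^{2M}$.

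Summing the $N=T/h$ swaps gives a total of order $L(1+x^{2M})\big(\veps+h^{M-1}\veps^{-(2M-1)}\big)$, the first term collecting the mollification error and the second the accumulated quadrature error. Optimising the free parameter by balancing $\veps$ against $h^{M-1}\veps^{-(2M-1)}$, i.e.\ $\veps^{2M}\sim h^{M-1}$, yields $\veps\sim h^{(M-1)/2M}$ and hence the order $h^{(M-1)/2M}$; as $(M-1)/2M\leq 1/2$, this dominates the $h^{1/2}$ strong term and \eqref{eq:est_lower_first} follows. For the fully discrete bound \eqref{eq:est_lower_fully}, the same telescoping is carried out for the scheme \eqref{eq:SL_V_fully}: the monotonicity \eqref{eq:interp}(iv) of $\mathcal I$ keeps the comparison valid through the backward recursion, while \eqref{eq:interp}(ii) injects an extra $O(L\Delta x)$ at each of the $N$ steps, which --- unlike the one-off mollification error --- genuinely accumulates to the additional term $\Delta x/h$.

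I expect the main obstacle to be neither the structure nor the balancing but the \emph{bookkeeping of constants}: to obtain \eqref{eq:est_lower_first}--\eqref{eq:est_lower_fully} with a single $C$, uniform in $n$ and carrying the precise polynomial weight $(1+x^{2M})$, one must propagate state-dependent Lipschitz and moment bounds for $v_h$, $\overline X^\alpha$ and $\widehat X^\alpha$ through the recursion and control all moments up to order $2M$ uniformly over the (bounded, by (H1)) controls. Establishing these growth and regularity estimates explicitly --- the new quantitative content relative to \cite{PicaReisi18_prob} --- is what the argument really turns on, and it is this computation that I would relegate to the appendix.
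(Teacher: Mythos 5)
Your proposal is correct and takes essentially the same route as the paper: the paper's own proof simply invokes \cite{PicaReisi18_prob} (Sections 4.2--4.3), whose argument is precisely your decomposition --- drop the policy-restriction error via $v\ge v_h$, bound the Euler--Maruyama contribution by a strong $O(h^{1/2})$ estimate, treat the Gau\ss--Hermite contribution as a weak error via mollification with the balance $\varepsilon\sim h^{(M-1)/2M}$, and let the interpolation error accumulate over $1/h$ steps to $\Delta x/h$. The explicit bookkeeping of the constants and the $(1+x^{2M})$ weight that you flag as the real work is exactly what the paper relegates to its Appendix \ref{app}.
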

\begin{proof}
%{Without loss of generality we can assume that the set of control values $A$ is bounded. Indeed, when only the lower bound is considered, a potential further restriction on the set of admissible controls $\mathcal A$ does not affect the estimates {giving eventually a lower value function}.}
%%Let $v_\rho$ the value function given by \eqref{eq:value_v} replacing $U$ with $U_\rho$.
%%By the assumptions on $U$ one simply has $v_\rho\leq v$. 
%\cgreen{\it CR: I think this has to be rewritten, depending on what we decide for the initial assumptions on $A$. For the numerical solution $V$, I think we will always work with a compact set $A$, so should say this earlier in the construction of the scheme and already for citing our PCPT result.
% }
When only the time discretization is taken into account, the estimate  \eqref{eq:est_lower_first} directly follows by \cite[Section 4.2, equation (4.9)]{PicaReisi18_prob}.
%
%observing that the additional requirement  in \cite{PicaReisi18_prob} of working with bounded coefficients is only required in order to apply the results on the piecewice constant control approximation in \cite{JakoPicaReisi18} and then it is not required to get the lower bound.
Moreover by  \cite[Section 4.3]{PicaReisi18_prob} for the fully discrete scheme one has (\ref{eq:est_lower_fully}),
%\beno
% v(t_n,x_m)- \VF(t_n,x_m)
%\geq  - L C (1+x_m^{2M}) \left(h^{(M-1)/2M}  + \frac{\Delta x}{h}\right),
%\eeno
{where $C$ only depends on $M$, $T$, the constants $K_0$ and $K_1$ in assumption (H2)}.
\end{proof}

{
\begin{remark}
Balancing the terms  $h^{(M-1)/2M}$ and $\Delta x/h$ on the right-hand side of (\ref{eq:est_lower_fully}) by judicious choice
of $\Delta x$ in relation to $h$ leads to %\eqref{eq:est_lower_fully}.
$$
 v(t_n,x_m) - \VF(t_n,x_m)
\geq   - L C (1+x_m^{2M}) \left(h^{(M-1)/2M}  + \Delta x^{(M-1)/(3M-1)}\right).
$$
\end{remark}
}
%\cgreen{CR: So this is only true for a particular refinement of $\Delta x$ -- should point out?}

%\begin{remark}
The scheme we are considering is monotone, stable and it has order one of consistency (for smooth test functions) for any $M\geq 2$.
For a scheme of this type, (upper and lower) error bounds of order ${1/4}$ in $h$ have been provided in \cite{BJ07, DJ12}  by PDE techniques.
Splitting each contribution to the error, namely the control discretization and the Euler-Maruyama and Gau\ss-Hermite approximations, the probabilistic proof proposed in \cite{PicaReisi18_prob} gives an improvement to the lower bound of these estimates by increasing the value of $M>2$, i.e.\ by using a more accurate quadrature formula.
{For large $M$, the order is arbitrarily close to 1/2 in $h$ and $1/3$ in $\Delta x$, improving the corresponding orders $1/4$ and $1/5$ obtained for $M=2$.
It is hence for $M>2$ that the term $h^{1/4}$ in the upper bound  becomes strictly dominant and restricts the order to $1/4$, independent of $M$.}
%Due to the control approximation the upper bound remains of order $1/4$. 
{
The analysis that follows aims to eliminate 
this dominant term and replace it by a term which can be computed \textit{a posteriori} from the numerical solution of the original and its dual problem, which
we expect to be smaller generally than that from the %Euler-Maruyama and 
Gau\ss-Hermite approximations.
This is confirmed in our tests. Hence we 
provide a computable upper bound which is empirically of the same order as the lower bounds obtained in Proposition \ref{prop:lower}.
}
%\end{remark}

\section{Duality-based error estimates}\label{sec:dual_problem}
In this  section we discuss how duality theory can be employed to obtain an upper bound of the error associated with our approximation scheme. Assuming to be able to extend either the PDE-based error estimates in \cite{K97, K00, BJ02, BJ05, BJ07} or the probabilistic ones in \cite{PicaReisi18_prob} to the particular problem \eqref{eq:SDE} and \eqref{eq:value_v}, this would result in both cases in an upper bound of order $1/4$ in $h$ for any choice of $M\geq 2$, as explained at the end of the previous section.
{
We show here that for our class of problems it is possible to pass through the definition of a dual problem to % improve such estimates.
replace these \textit{a priori} estimates by \textit{a posteriori} computable bounds, which are empirically significantly smaller.
}

\subsection{The dual problem}
The dual problem associated with  \eqref{eq:SDE} and \eqref{eq:value_v}  is defined in \cite{CuoLiu00} by %the dynamics
\begin{equation}\label{SDEd_CL}
 \!\! \left\{\begin{array}{l}
 \!\! \mathrm{d}Y_s=-\big(r(s)\!+\!\tilde g(s,\nu_s)\big) Y_s\, \mathrm{d}s+(\sig\sig^T(s))^{-1} Y_s(r(s)\mathbbm 1 \!-\! b(s)\!-\!\nu_s)\cdot \sig(s) \,\mathrm{d}B_s, \!\quad s\in(t, T), \\
 \!\! Y_t=y, \end{array}\right.
 \end{equation}
 for all $t\in [0,T)$, where
 \vspace{-0.4 cm}
 \be\label{eq:def_gtilde}
\widetilde g(t,\nu):=\sup_{a\in A}\big\{g(t,a)-a\cdot \nu\big\},
\ee 
and the dual utility function $\widetilde U$ is the convex conjugate of $U$, i.e.
$$
\widetilde U(y):=\underset{x\geq  0}\sup\{U(x)-xy\}.
$$ 
%\begin{remark} We can observe that, thanks to assumption (H7), for any $x>0$ the supremum above is attained. Moreover, $\tilde U$ is convex and decreasing.
%\end{remark}
The dual value function is defined by 
\be\label{eq:value_tv}
\tilde v(t,y) = \inf_{\nu\in \mathcal V} \mathbb E_{t,y}\big[ \widetilde U(Y^\nu_T)\big], 
\ee
where $\mathcal V$ is the set of $\R^d$-valued  progressively measurable processes such that $\int^T_0 |\nu_s|^2 \, \mathrm{d} s + \int^T_0 \widetilde g(s,\nu_s) \, \mathrm{d} s < +\infty$.
One has the following duality result:
\begin{prop}[\cite{CuoLiu00}, Theorem 2]\label{prop:duality_cont}
Let assumptions (H1) to (H4) be satisfied. Then for any $t\in[0,T]$,  $x\geq 0$, the primal and dual value functions, $v$ and $\widetilde v$,
satisfy the conjugate relation
\be\label{eq:duality_CL}
v(t,x)=\underset{y>0}\inf\big\{\widetilde v(t,y)+xy\big\}.
\ee
\end{prop}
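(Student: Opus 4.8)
The plan is to establish the conjugate relation \eqref{eq:duality_CL} as two inequalities: the ``weak-duality'' bound $v(t,x)\le\inf_{y>0}\{\widetilde v(t,y)+xy\}$, and the ``no-gap'' reverse inequality. For weak duality I would fix $x\ge0$, $y>0$ and admissible controls $\alpha\in\cA$, $\nu\in\mathcal V$, and study the product $X_\cdot Y_\cdot$ of the primal and dual states $X=X^{t,x,\alpha}$ and $Y=Y^{t,y,\nu}$. Using (H3) to invert $\sig\sig^\top$ and applying It\^o's product rule, the cross-variation term contributes $X_sY_s\,\alpha_s^\top(r(s)\mathbbm 1-b(s)-\nu_s)\,\mathrm{d}s$, and once the two drifts are added the contributions of $r$ and $b$ cancel, leaving the drift of $X_sY_s$ equal to
$$
X_sY_s\big(g(s,\alpha_s)-\alpha_s^\top\nu_s-\widetilde g(s,\nu_s)\big)\,\mathrm{d}s.
$$
By the definition \eqref{eq:def_gtilde} of $\widetilde g$ this factor is nonpositive, so $X_\cdot Y_\cdot\ge0$ is a local supermartingale and, being nonnegative, a true supermartingale; hence $\E_{t,x,y}[X_TY_T]\le xy$. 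Combined with the Fenchel--Young inequality $U(X_T)\le\widetilde U(Y_T)+X_TY_T$, taking expectations and then passing to the supremum over $\alpha$ and the infimum over $\nu$ and $y>0$ gives $v(t,x)\le\inf_{y>0}\{\widetilde v(t,y)+xy\}$.

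For the reverse inequality I would produce matching optimizers and verify complementary slackness. Concretely, I would first show that the infimum $\inf_{y>0}\{\widetilde v(t,y)+xy\}$ is attained at some $y^*=y^*(t,x)$ and that the dual problem \eqref{eq:value_tv} at $y^*$ is attained at some $\nu^*\in\mathcal V$, with terminal value $Y_T^*:=Y_T^{t,y^*,\nu^*}$. The candidate optimal terminal wealth is then obtained by inverting the marginal utility, $X_T^*:=I(Y_T^*)$ with $I:=(U')^{-1}$, which under (H4) is exactly the equality case of Fenchel--Young, $U(X_T^*)=\widetilde U(Y_T^*)+X_T^*Y_T^*$. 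The decisive step is to exhibit an admissible $\alpha^*\in\cA$ whose wealth process attains $X_T^*$ and along which the supermartingale $X_\cdot^*Y_\cdot^*$ is in fact a martingale, so that $\E[X_T^*Y_T^*]=xy^*$ with equality. Granting this,
$$
v(t,x)\ge\E_{t,x}\big[U(X_T^{\alpha^*})\big]=\E_{t,y^*}\big[\widetilde U(Y_T^{\nu^*})\big]+xy^*=\widetilde v(t,y^*)+xy^*=\inf_{y>0}\{\widetilde v(t,y)+xy\},
$$
which together with weak duality yields \eqref{eq:duality_CL}.

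I expect the attainment-and-no-gap step to be the main obstacle. One must control the admissibility and integrability of both optimizers, use the concavity of $g(t,\cdot)$ from (H2) so that the supremum in \eqref{eq:def_gtilde} is attained and the complementary relation $\alpha_s^{*\top}\nu_s^*=g(s,\alpha_s^*)-\widetilde g(s,\nu_s^*)$ holds along the optimizers, and upgrade the nonnegative local supermartingale $X_\cdot^*Y_\cdot^*$ to a genuine martingale, which is where (H3) and the boundedness of $A$ in (H1) enter. An attractive alternative that bypasses this explicit construction is a verification argument: one shows that $\widetilde v$ is a viscosity solution of the HJB equation dual to \eqref{HJB} with terminal datum $\widetilde U$, that its Legendre transform $\inf_{y>0}\{\widetilde v(t,y)+xy\}$ in $y$ then solves \eqref{HJB} with terminal datum $U=(\widetilde U)^*$, and invokes the uniqueness of the continuous viscosity solution recorded after \eqref{HJB} to identify this transform with $v$. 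This route trades the probabilistic attainment argument for the regularity needed to transform the two equations, while relying only on results already available in the excerpt.
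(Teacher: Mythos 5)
The paper does not actually prove this proposition: it is imported verbatim from \cite{CuoLiu00} (Theorem 2), so your attempt must be measured against that proof rather than anything in the text. Your weak-duality half is correct and is indeed how the cited argument begins: the It\^o product computation giving the drift $X_sY_s\big(g(s,\alpha_s)-\alpha_s^\top\nu_s-\widetilde g(s,\nu_s)\big)\le 0$ by the definition \eqref{eq:def_gtilde}, the upgrade of the nonnegative local supermartingale $X_\cdot Y_\cdot$ to a true supermartingale, Fenchel--Young, and optimization over $\alpha$, $\nu$ and $y$. (A small point to record: since $U$ is finite at $0$, $\widetilde U\ge U(0)>-\infty$, so the expectations can legitimately be split.)

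The genuine gap is in the reverse inequality, where your plan assumes exactly the two facts that constitute the hard content of the theorem. First, attainment of the dual infimum by some $\nu^*\in\mathcal V$ is not guaranteed under (H1)--(H4): for this class of problems the dual optimizer in general exists only in an enlarged class (supermartingale deflators, as in Kramkov--Schachermayer \cite{KraScha99}), not as a control process in $\mathcal V$; the paper itself hedges on precisely this point in Section \ref{sec:approx_dual} (``\emph{if} there exists a uniformly bounded optimal control $\nu^*\in\mathcal V$\dots''). An argument that starts from an object whose existence may fail is not a proof; the correct route works with minimizing sequences and a compactness/limiting argument, or establishes no-gap without dual attainment. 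Second, the replication step---exhibiting $\alpha^*\in\cA$ whose wealth attains $X_T^*=I(Y_T^*)$ and along which $X^*_\cdot Y^*_\cdot$ is a true martingale---is the core of the Cuoco--Liu proof (a martingale-representation argument adapted to the nonlinear $g$ and the constraint set $A$), and you leave it entirely to ``granting this''. A further technical defect: since the Inada condition at $0$ is not assumed (Remark \ref{rem:on_U}), $(U')^{-1}$ is not globally defined, and $I$ must be the generalized inverse with $I(y)=0$ for $y\ge U'(0^+)$ before the Fenchel--Young equality case can be invoked. Your alternative viscosity-solution route has the same status as the probabilistic one: passing the Legendre transform through the two HJB equations is itself a nontrivial claim (the pointwise conjugate of a viscosity solution is not automatically a viscosity solution of the conjugate equation), so it, too, is a plan rather than a proof.
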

\begin{remark}
The results in \cite{CuoLiu00} hold also if $r,b,\sig$ and $g$ are stochastic processes. However, as our approximation scheme makes use of the Markovian structure,
{we would have to add extra variables to the state space to account for this, which is outside the scope of this work.}
\end{remark}
\begin{remark}\label{rem:on_U}
As mentioned in \cite[Section 6.5]{Rogers}, the usual Inada condition 
$$
\lim_{x\to 0^+} U'(x)=+\infty
$$
(requested in \cite{CuoLiu00}) is not necessary for proving the main duality results. 
\end{remark}

\subsection{Approximation of the dual problem}\label{sec:approx_dual}
The scheme presented  in Section \ref{sec:descr_scheme} can be used to approximate the value function $\widetilde v$ associated with the dual problem {\eqref{SDEd_CL}-\eqref{eq:value_tv}}.
{To this end, we define by $\Gamma\subset\R^d$ a compact set 
and by $\mathcal V^\Gamma \subset \mathcal V$ the set of all a.s.\ $\Gamma$-valued elements  of $\mathcal V$.
One clearly has
\be\label{eq:value_tv_gam}
\tilde v{(t,y)} \le \tilde v^\Gamma(t,y) = \inf_{\nu\in \mathcal V^\Gamma} \mathbb E_{t,y}\big[ \widetilde U(Y^\nu_T)\big].
\ee
{ If there exists a uniformly bounded optimal control $\nu^*\in \mathcal V$,   
 one can find a compact set $\Gamma$ such that 
 $\tilde v = \tilde v^\Gamma$.
 %replacing $\mathcal V$ in (\ref{eq:value_tv}) by $\mathcal V^\Gamma$, one obtains  the same value function $\widetilde v$.
 Otherwise, such an approximation introduced on the set of controls will result in a strictly bigger value function and in a %loss of convergence of the 
duality gap  which does not diminish under mesh refinement and can only be decreased by increasing $\Gamma$.
%and \ref{rem:cntrst}).
Nonetheless, the inequalities stated in this section still hold in this case.}

%from which the dual controls are chosen for the numerical scheme. %where we restrict the process $\nu$  to take values.
Let further $ \zeta_i$, $i=0,\ldots,N-1$, be i.i.d.\ copies of the increments from the definition of the primal approximation.
}
For the discrete time scheme, one can then recursively define
 \begin{equation}
 \label{eq:SL_tV}
\begin{cases}
 \widetilde V(t_n,y)= & \underset{\gamma\in  \Gamma}{\inf}\;\E_{t_n,y}\big[\widetilde V(t_{n+1},\widehat Y^{\gamma}_{t_{n+1}})\big],\qquad\qquad n=N-1,\ldots,0, \\
 \widetilde V(t_{_N},y)=& \widetilde U(y) ,
 \end{cases}
 \end{equation}
 where $y\geq 0$, 
  and $\widehat Y^{t_n,y,\gamma}_\cdot$ is the Markov chain recursively defined by  
 \be\label{eq:def_tildeYh}
\begin{cases}
 \widehat Y_{t_n}=y,\\
 \widehat Y_{t_{i+1}}= \widehat Y_{t_{i}} - h \big(r(t_i)\!+\!\tilde g(t_i,\gamma_i)\big) \widehat Y_{t_i}+\sqrt{h}(\sig\sig^T(t_i))^{-1} \widehat Y_{t_i}(r(t_i)\mathbbm 1 \!-\! b(t_i)\!-\!\gamma_i)\cdot \sig(t_i) \zeta_i
 \end{cases}
 \ee
for $i=n,\ldots,N-1$. Denoting by
$\mathcal V_h^\Gamma$  the set of all
% for any $n=0,\ldots, N-1$, 
 $\nu \equiv(\gamma_0,\ldots,\gamma_{N-1})$ adapted to the filtration generated by
 $(\zeta_0,\ldots,\zeta_{N-1})$, %with $\gamma_i\in \Gamma\subseteq\R^d$ 
 with $\gamma_i$ random variables taking values in $\Gamma$, $i=0,\ldots,N-1$ one has 
 $$
 \widetilde V(t_n,y)=  \underset{\nu\in \mathcal V_h^\Gamma}{\inf}\;\E_{t_n,y}\big[\widetilde U(\widehat Y^{\nu}_{T})\big].
 $$
}

The fully discrete version of the scheme is then given by 
\begin{equation}
 \label{eq:SL_tV_fully}
\begin{cases}
 \tVF(t_n,y_j)= &%\hspace{-0.3cm}
 \underset{\gamma \in \Gamma}\inf\;\sum^{{ M}}_{i=1} {\lambda_i}\; \mathcal I[\tVF]\left(t_{n+1},y_j  + h\, y_j \left( r(t_n)\!+\!\tilde g(t_n,\gamma) \right)\right. \\
 & \hspace{4.2 cm} \left. + \sqrt{h} y_j (\sig\sig^T(t_n))^{-1} (r(t_n)\mathbbm 1 \!-\! b(t_n)\!-\!\gamma)\cdot \sig(t_n) \xi_i\right), \\
 \tVF(t_{_N},y_j)=& \hspace{-0.3cm}\widetilde U(y_j),
\end{cases}
 \end{equation}
 for $n=N-1,\ldots,0$ and $j\in \N$.

%
%\be\label{eq:tV}
%\widetilde V(t_n,y)=\inf_{\nu\in\mathcal V_{h}}\E_{t_n,y}\Big[\widetilde U(\widehat Y^{\nu}_T)\Big],
%\ee
%where $\mathcal V_{h}$ denotes the set of processes $\nu\in\mathcal V$ such that $\nu_s\equiv \gamma_i$ for any $s\in[t_i,t_{i+1}]$. Analogously one can define the fully discrete approximation $\tVF$.

\subsection{An \textit{a priori} upper bound for $\tilde v$}\label{subsect:upper} 
The approximation scheme we defined for the dual problem is the same we used for the primal one, with the only difference that we have to handle a minimization problem. Therefore, we can  use the arguments in \cite{PicaReisi18_prob} to obtain  an accurate upper bound for the differences $\widetilde v-\widetilde V$ and $\widetilde v-\tVF$.

\begin{prop} \label{prop:upper}
Let  assumptions (${\text{H1}}$) to (${\text{H4}}$) be satisfied {and let $\widetilde U$ be Lipschitz continuous with Lipschitz constant $\widetilde L\geq 0$}. Then, there exists a constant ${\widetilde C}\geq 0$,
such that for any $n=0,\ldots,N$, $y>0$,
\be\label{eq:est_upper_first}
\widetilde v(t_n,y)
\leq \widetilde V(t_n,y)+ \widetilde L {\widetilde C} (1+y^{2M}) h^{(M-1)/2M}, %\Big(\vare^{1-2M} h^{M-1}+h^{1/2}+\vare\Big)
\ee
and for any  $n=0,\ldots,N$, $j\in \N^+$,
\be\label{eq:est_upper_fully}
\widetilde v(t_n,y_j)
\leq \tVF (t_n,y_j) + \widetilde L {\widetilde C} (1+y_j^{2M}) \left(h^{(M-1)/2M}  + %\Delta x^{(M-1)/(3M-1)}\right).
\Delta x/h \right).
\ee
\end{prop}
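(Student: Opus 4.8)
The plan is to exploit the fact that the dual problem \eqref{SDEd_CL}--\eqref{eq:value_tv}, together with its discretizations \eqref{eq:SL_tV} and \eqref{eq:SL_tV_fully}, has exactly the same structure as the primal problem, the only two differences being that the supremum is replaced by an infimum and that the coefficients are different. To handle the first difference I would use the elementary identity $\inf = -\sup(-\,\cdot\,)$ to reduce the minimization to a maximization, so that the machinery behind Proposition \ref{prop:lower} — that is, the arguments of \cite[Sections 4.2--4.3]{PicaReisi18_prob} — applies essentially verbatim after a single sign change. Concretely, set $\widehat v := -\widetilde v$, $\widehat V := -\widetilde V$ and $\widehat U := -\widetilde U$. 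Since $\widetilde U$ is the convex conjugate of $U$ it is convex, so $\widehat U$ is concave, and $\widehat v$ is the value function of a \emph{maximization} problem with dynamics \eqref{SDEd_CL} and concave terminal payoff $\widehat U$, whose Markov chain approximation is $\widehat V$ (the recursion \eqref{eq:SL_tV} with $\sup$ in place of $\inf$). A lower bound of the form $\widehat v \ge \widehat V - \widetilde L\,\widetilde C\,(1+y^{2M})\,h^{(M-1)/2M}$ then becomes, upon negating, precisely the desired \eqref{eq:est_upper_first}; the fully discrete bound \eqref{eq:est_upper_fully} is obtained in the same way. Here the Lipschitz constant $\widetilde L$ of $\widetilde U$ plays the role that $L$ played in Proposition \ref{prop:lower}.

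The real work, and the main obstacle, is to verify that the coefficients of the dual dynamics \eqref{SDEd_CL} satisfy the structural regularity hypotheses under which the error analysis of \cite{PicaReisi18_prob} was carried out — namely, a control set that is compact and convex, coefficients that are linear in the state $Y$ with control-and-time-dependent factors, Lipschitz in the control uniformly in time, and H{\"o}lder-$1/2$ in time uniformly in the control. I would check these in turn. The effective drift factor is $-(r(t)+\widetilde g(t,\nu))$ and the effective diffusion factor is $(\sig\sig^\top(t))^{-1}(r(t)\mathbbm 1 - b(t) - \nu)\cdot\sig(t)$; both are linear in $Y$ with factors independent of $Y$, exactly as in \eqref{eq:SDE}. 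For $\widetilde g$, note that by \eqref{eq:def_gtilde} it is a supremum of affine functions of $\nu$ with slopes in the bounded set $A$ (assumption (H1)), hence convex and Lipschitz in $\nu$ with constant $\sup_{a\in A}|a|<+\infty$; its time-H{\"o}lder continuity follows from (H2)$(ii)$ via
\be\label{eq:tildeg_time}
|\widetilde g(t,\nu)-\widetilde g(s,\nu)| \le \sup_{a\in A}|g(t,a)-g(s,a)| \le K_1|t-s|^{1/2}.
\ee
The diffusion factor is affine (hence Lipschitz) in $\nu$, and its time regularity reduces to the time regularity of $r,b,\sig$ from (H2)$(i)$ together with that of $(\sig\sig^\top)^{-1}$.

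The one point requiring genuine care is the regularity of $(\sig\sig^\top(t))^{-1}$: the uniform ellipticity (H3) guarantees that $\sig\sig^\top(t)$ has eigenvalues bounded below by $\eta$, so its inverse is uniformly bounded, and since matrix inversion is locally Lipschitz on the set of matrices with spectrum bounded away from zero, the H{\"o}lder-$1/2$ continuity of $\sig\sig^\top$ in time (inherited from (H2)$(i)$) transfers to $(\sig\sig^\top)^{-1}$. Once these bounds are assembled, the analogue of Proposition \ref{prop:lower} for $\widehat v$ holds, with a constant $\widetilde C$ that can be tracked explicitly in terms of $K_0$, $K_1$, $\eta$, $T$, $M$, $\sup_{a\in A}|a|$ and the diameter of $\Gamma$ (this being the content deferred to Appendix \ref{app}). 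Applying the time-discretization estimate \cite[Section 4.2, equation (4.9)]{PicaReisi18_prob} yields \eqref{eq:est_upper_first}, and the fully discrete estimate \cite[Section 4.3]{PicaReisi18_prob} yields \eqref{eq:est_upper_fully}, completing the argument. I note that the approximation of the control set by the compact $\Gamma$ in \eqref{eq:value_tv_gam} only enlarges $\widetilde v$, so all inequalities here are stated for $\widetilde v^\Gamma$ and the displayed bounds continue to hold with $\widetilde v$ on the left.
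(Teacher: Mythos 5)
Your proposal is correct and follows essentially the same route as the paper: the paper's proof likewise applies the estimates of \cite[Section 4.2]{PicaReisi18_prob} ``adapted to a minimisation problem'' to $\tilde v^\Gamma$ (using $\tilde v \le \tilde v^\Gamma$ exactly as you do), with the key substantive step being the verification that the dual drift and diffusion factors are H\"older-$1/2$ in time with a constant depending on $T$, $K_0$, $K_1$, $\eta$ and the bounds on $\Gamma$ --- precisely the coefficient checks (Lipschitz/H\"older bounds for $\tilde g$ and for $(\sig\sig^\top)^{-1}$ via uniform ellipticity) that you carry out. Your explicit $\inf = -\sup(-\,\cdot\,)$ reduction is simply a clean formalization of the paper's phrase ``adapted to a minimisation problem,'' so the two arguments coincide in substance.
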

\begin{proof}
%{\sout{Let us assume the set $\Gamma$ is bounded, if this is not the case we can replace $\Gamma$ by any bounded subset and the upper bound remains unchanged.}
{
%If, for any $\nu\in \mathcal V$, one has $\nu_s \in \Gamma$ a.e., a.s, %with $\Gamma$  compact set.}
The result follows by applying the estimates from Section 4.2 in \cite{PicaReisi18_prob}, adapted to a minimisation problem, to $\tilde v^\Gamma$ in (\ref{eq:value_tv_gam}).
Under the assumptions (H1)-(H3), one has 
\begin{align*}
%&\|\tilde g\|_\infty+ \|r\|_\infty+\|(\sigma\sigma^T)^{-1}(r\mathbbm 1-b-\nu)\|_\infty\leq C_0\\
 \|(\sig\sig^T(t))^{-1}(r(t)\mathbbm 1-b(t)-\gamma)-& (\sig\sig^T(s))^{-1}(r(s)\mathbbm 1-b(s)-\gamma)\|\\
& + |\tilde g(t,\gamma)-\tilde g(s,\gamma)|+|r(t)-r(s)|\leq C_0|t-s|^{1/2},
\end{align*} 
where $C_0$ depends  on $T$, the constants $K_0, K_1$ and $\eta$ in assumptions (H2)-(H3) and the uniform bounds on the elements of $\Gamma$, so that the dynamics \eqref{SDEd_CL} satisfies the assumptions in \cite{PicaReisi18_prob}. % leading to the desired estimates.
%If $\nu_s \in \Gamma$ is not guaranteed, the upper bound still applies as restricting the controls to $\Gamma$ means the infimum is taken over a smaller set.}
}
\end{proof}

{
\begin{remark}\label{rem:controls}%The results we have presented, hold under assumptions (H1)-(H3) and ($\widetilde{\text{H1}}$)-($\widetilde{\text{H3}}$). 
A similar truncation strategy as for $\mathcal V$ can be applied to {the set of controls } $\mathcal A$ if $A$ is unbounded. For the thus obtained numerical solution, the inequalities
in Proposition \ref{prop:lower} still hold. Again, in this case, the duality gap can only be reduced by increasing $A$ and not {only}  by letting $h$ and $\Delta x$ go to 0 alone.
%To derive our bounds in Proposition \ref{prop:lower} and  \ref{prop:upper} we pass by an approximation of the sets $\mathcal A$ and $\mathcal V$ obtained considering controls taking values in bounded sets $A$ and $\Gamma$, respectively. Clearly, if  problem \eqref{eq:value_v} (resp. \eqref{eq:value_tv} ) admits a bounded optimal control, there exists a suitable bounded set $A$ (resp. $\Gamma$) leading to the same value function. If this is not the case, the results of Proposition \ref{prop:lower} and \ref{prop:upper} still holds true. Indeed, as explained in the proofs,  an artificial restriction on the set of controls simply leads to a smaller (resp. bigger) value function of the primal (resp. dual) problem.  
%% the   
%However, in this general case, the truncation of the control sets creates a duality gap. Concretely speaking, this results in a loss of convergence of the quantities $G^{h}$ and $G^{h,\Delta x}$ (the numerical duality gaps) defined below. In our numerical tests we make use of the information we have on the optimal controls $\alpha^*\in \cA$ and $\nu^*\in \cV$ to derive uniform bounds on the elements of $A$ and $\Gamma$. %See also the discussion of the general case is given in  Remark \ref{rem:cntrst}.
\end{remark}
}

%
%{\color{red}{REF 1 (many comments of bounds of controls): \\
%2) In the assumption (H1), the set of control values A is not assumed bounded, as usual in this kind of arguments. But in Remark 3, it is said that "Under the compactness assumption on the set A,..". Please, clarify this point. \\
%5) Duality-based error estimates: The duality argument is very interesting, but there is an important points where the author are too vague. Indeed, by definition, the dual problem involves a non compact control space. Then, the authors introduce a set $\Gamma$ which approximates the dual control set. How does this additional approximation affect the error estimate? I think that Remark 3 is not sufficient to explain this point and the authors should pay much more attention to this point. The boundedness of the control set is a crucial point and the authors should make an effort to clarify when this assumption is assumed and which results remain true when it does not hold.\\
%6) (i) Prop. 4.2 (first two lines): ``we can assume that for any ....'': explain why the upper bound still applies;\\
%7) Remark 3: see also 4), explain in details how to deal with the unboundedness of the control set. Which results remain true if $\Gamma$ is unbounded?
%}}
%{ AP: See modified remark 4 and 5.}.\\   

\subsection{Using duality in error estimates}\label{sect:idea}

In the sequel, we will use the following notation: for any $n=0,\ldots,N$, $x>0$ 
 \begin{align}
G^{h}(t_n, x)& :=\underset{y>0}{\min} \Big\{\widetilde V(t_n,y)+x y \Big\}-V(t_n,x) \label{eqn:gap} \\
I^h(t_n, x) & :=\arg\min_{y>0} \Big\{\widetilde V (t_n,y)+x y\Big\}, \nonumber 
\end{align}
and for any $n=0,\ldots,N$, {$m\in\N^+$}
 \begin{align}
G^{h,\Delta x}(t_n,x_m)& :=\underset{j\in \N^+}{\min} \Big\{\tVF(t_n,y_j)+x_m y_j \Big\}-\VF(t_n,x_m), \label{eqn:gap_fully}\\
I^{h,\Delta x}(t_n,x_m) & :=\arg\min_{j\in \N^+} \Big\{\tVF(t_n,y_j)+x_m y_j\Big\}. \nonumber
\end{align}
We refer to $G^h$ and $G^{h,\Delta x}$ as the \textit{numerical duality gap} of the semidiscrete and fully discrete scheme respectively.  

%{Observe that $I$ exists and it is finite}.

One has the following result:

\begin{teo}\label{eq:error}
Let assumptions (H1) to (H4) be satisfied {and let $U$ and $\widetilde U$ be Lipschitz continuous with Lipschitz constants $L$ and $\widetilde L$, respectively}. Then, there exist some constants $C, \widetilde C\geq 0$ 
%{and $\delta: \mathbb{R}^+\!\! \rightarrow \mathbb{R}^+$\! with $\delta(\rho) = o(\rho^{-p})$ as $\rho\rightarrow\infty$ for all $p>0$},
such that for any $n=0,\ldots, N$, $x> 0$
\begin{align}
\label{eq:bounds}
-L C (1+x^{2M}) h^{(M-1)/2M}\leq v(t_n,x)- V(t_n,x) \leq G^{h}(t_n,x) +\widetilde L {\widetilde C} (1+(I^{h}(t_n,x))^{2M}) h^{(M-1)/2M} 
\end{align}
and for any $n=0,\ldots, N$, $m\in\N^{ +}$
\begin{equation}\label{eq:error_fully}
\begin{split}
-L C (1+x_m^{2M})\Big(h^{(M-1)/2M}+  \Delta x/h \Big) \leq v(t_n,x_m)-\VF(t_n,x_m)
\\ \leq G^{h,\Delta x}(t_n,x_m) +\widetilde L {\widetilde C} (1+(I^{h,\Delta x}(t_n,x_m))^{2M})\Big(h^{(M-1)/2M}+  \Delta x/h \Big).
\end{split}
\end{equation}
\end{teo}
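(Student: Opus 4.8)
The plan is to read off the two lower bounds directly from Proposition \ref{prop:lower}, and to derive the two upper bounds by combining the continuous conjugate relation of Proposition \ref{prop:duality_cont} with the dual error estimates of Proposition \ref{prop:upper}, evaluated at the \emph{discrete} dual minimiser. Indeed, the lower bounds in \eqref{eq:bounds} and \eqref{eq:error_fully} are precisely the estimates \eqref{eq:est_lower_first} and \eqref{eq:est_lower_fully}, so no further work is needed there and the entire argument concentrates on the upper bounds.

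For the semidiscrete upper bound I would set $y^\ast := I^h(t_n,x)$, the minimiser appearing in the definition \eqref{eqn:gap} of $G^h(t_n,x)$. The crucial observation is that the conjugate relation \eqref{eq:duality_CL} supplies, upon evaluating the infimum at the single point $y=y^\ast$, the weak-duality inequality
\[
v(t_n,x) = \inf_{y>0}\{\widetilde v(t_n,y)+xy\} \le \widetilde v(t_n,y^\ast) + x\,y^\ast .
\]
I would then invoke the dual \textit{a priori} bound \eqref{eq:est_upper_first} at $y=y^\ast$ to replace $\widetilde v(t_n,y^\ast)$ by $\widetilde V(t_n,y^\ast)$, obtaining
\[
v(t_n,x) \le \widetilde V(t_n,y^\ast) + x\,y^\ast + \widetilde L\,\widetilde C\,(1+(y^\ast)^{2M})\,h^{(M-1)/2M}.
\]
Subtracting $V(t_n,x)$ and using that $y^\ast$ is the minimiser, so that $\widetilde V(t_n,y^\ast)+x\,y^\ast - V(t_n,x)=G^h(t_n,x)$ by \eqref{eqn:gap}, yields exactly the upper bound in \eqref{eq:bounds}.

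The fully discrete upper bound in \eqref{eq:error_fully} follows along identical lines: I would put $y_{j^\ast}:=I^{h,\Delta x}(t_n,x_m)$, apply \eqref{eq:duality_CL} at $y=y_{j^\ast}$ to get $v(t_n,x_m)\le \widetilde v(t_n,y_{j^\ast})+x_m\,y_{j^\ast}$, use the fully discrete dual estimate \eqref{eq:est_upper_fully} at the grid point $y_{j^\ast}$, and finally read off $\tVF(t_n,y_{j^\ast})+x_m\,y_{j^\ast}-\VF(t_n,x_m)=G^{h,\Delta x}(t_n,x_m)$ from \eqref{eqn:gap_fully}.

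I do not expect a genuinely hard estimate in this argument: its mechanism is the remark that only the \emph{weak} duality inequality $v(t,x)\le \widetilde v(t,y)+xy$, valid for each fixed $y$ rather than the full equality, is required, together with the one-sided dual bounds already in hand. The only point needing care is the well-posedness of the minimisers $I^h$ and $I^{h,\Delta x}$ implicit in \eqref{eqn:gap}--\eqref{eqn:gap_fully}; I would justify their existence from the coercivity of the convex maps $y\mapsto \widetilde V(t_n,y)+x\,y$ and $j\mapsto \tVF(t_n,y_j)+x_m\,y_j$, which tend to $+\infty$ as the argument approaches $0$ or $+\infty$ because of the corresponding behaviour of $\widetilde U$ and the strictly positive linear term. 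The explicit form of $\widetilde C$ in terms of the data is then inherited from the constant produced in the proof of Proposition \ref{prop:upper}.
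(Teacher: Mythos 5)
Your proposal is correct and follows essentially the same route as the paper's proof: lower bounds read off from Proposition \ref{prop:lower}, and upper bounds obtained by combining the conjugate relation \eqref{eq:duality_CL} with the dual estimate of Proposition \ref{prop:upper} evaluated at the minimiser $I^h$ (resp.\ $I^{h,\Delta x}$), then invoking the definition of the gap. The only cosmetic difference is the order of steps (you evaluate the continuous infimum at $y^\ast$ before applying the dual bound, whereas the paper pushes the bound inside the infimum first), which yields the identical chain of inequalities.
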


\begin{proof}
The first inequalities in \eqref{eq:bounds} and \eqref{eq:error_fully} follow directly by Proposition \ref{prop:lower}. It remains to prove the upper bounds. We prove the result for the semi-discrete scheme, while the proof for the fully discrete scheme follows by similar arguments.
Thanks to Proposition \ref{prop:duality_cont}, Proposition \ref{prop:upper} and the definition of $I^h(\cdot,\cdot)$ one has 
\begin{align*}
v(t_n,x) & =\inf_{y>0}\left\{ \widetilde v(t_n,y) + xy\right\}\leq \inf_{y>0}\left\{ \widetilde V(t_n,y) + xy + \widetilde L \widetilde C (1 + y^{2M}) h^{(M-1)/2M}\right\}\\
& \leq \widetilde V(t_n,I^h(t_n,x)) + x \, I^h(t_n,x) + \widetilde L \widetilde C \left(1 + (I^h(t_n,x))^{2M}\right)h^{(M-1)/2M}\\ 
& = \inf_{y>0}\left\{ \widetilde V(t_n,y) + xy \right\}  +  \widetilde L \widetilde C \left(1 + (I^h(t_n,x))^{2M}\right)h^{(M-1)/2M}.
\end{align*}
Therefore, 
\begin{align*}
v(t_n,x) - V(t_n,x) \leq  \inf_{y>0}\left\{ \widetilde V(t_n,y) + xy \right\} -  V(t_n,x) +  \widetilde L \widetilde C \left(1 + (I^h(t_n,x))^{2M}\right)h^{(M-1)/2M},
\end{align*}
which gives the desired result.
\end{proof}

%Observe that due to the particular concavity (resp.\ convexity) feature of the primal (resp.\ dual) problem,
%\cred{the quantity $x$ (resp.\ $I(x)$) typically increases as $x$ approaches $+\infty$ (resp.\ $0$) \fbox{unclear}}.  
Observe that due to the particular convexity feature of the dual problem, the quantity $I(x)$ typically increases as $x$ approaches $0$.

The duality gap for the fully discrete scheme is computable efficiently, see e.g. \cite[Section 3.4]{FerrFalcBook},
so that \eqref{eq:error_fully} provides a practically useful \textit{a posteriori} bound. 

\textit{A priori} bounds could be obtained by proving that the numerical duality gap $G^{h}$  (resp.  $G^{h,\Delta x}$) decays with order at most $h^{(M-1)/2M}$ (resp. $h^{(M-1)/2M} + Dx/h$). This requires a proof that $V$ and $\widetilde V$ (resp. $\VF$ and $\tVF$) satisfy an approximated duality relation. 
{Indeed, the key feature of dynamics \eqref{eq:SDE} and \eqref{SDEd_CL} leading to the conjugate relation \eqref{eq:duality_CL} is the following so called ``polar property''
$$
\underset{\nu\in\mathcal V}\sup\, \E\left[ X^{t,x,\alpha}_T Y^{t,y,\nu}_T\right] = xy \qquad \forall x,y \geq 0, t\in [0, T], \alpha\in \mathcal A.
$$
{
For the discrete time dynamics $\widehat X_\cdot$ and  $\widehat Y_\cdot$ defined in \eqref{eq:def_Xh} and \eqref{eq:def_tildeYh}, respectively, a straightforward 
calculation  shows that  for any $\alpha\equiv(a_{n}, \ldots, a_{N-1})\in\mathcal A_{h}$ and $\nu\equiv(\gamma_{n}, \ldots, \gamma_{N-1})\in\mathcal V^{\Gamma}_{h}$
\small
\begin{align*}
& \widehat X^{t_n,x,\alpha}_T\widehat  Y^{t_n,y,\nu}_T-xy\nonumber \\
& =\sum^{N-1}_{i=n}  \widehat X^{t_n,x,\alpha}_{t_i} \widehat  Y^{t_n,y,\nu}_{t_i} \Big\{h\Big(a_i(b(t_i)-r(t_i))+g(t_i,a_i)-\tilde g(t_i,\gamma_i)+a_i(r(t_i)-b(t_i)-\gamma_i)\zeta^2_i\Big)\\
& \qquad+h^2\Big(-\big(r(t_i)+\tilde g(t_i,\gamma_i)\big)\big(r(t_i)+a_i(b(t_i)-r(t_i))+g(t_i,a_i)\big)\Big)\\
%+\frac{\alpha_i^2}{4}(r_i-b_i-\nu_i)^2(\zeta^2_i-1)^2\Big)\nonumber\\
& \qquad+(\ldots)\zeta_i+(\ldots ) (\zeta^2_i-1)\Big\}.\nonumber
\end{align*}
\normalsize
Taking the expectation in the expression above, thanks to the independence and distribution of the random variables $\zeta_i$ and the definition of the convex conjugate $\tilde g$, one  gets 
\beno
\begin{aligned}
& \E\Big[ \widehat X^{t_n,x,\alpha}_T\widehat  Y^{t_n,y,\nu}_T\Big]-xy\leq C h^2 \sum^{N-1}_{i=n} \E\Big[\;\widehat X^{t_n,x,\alpha}_{t_i} \widehat  Y^{t_n,y,\nu}_{t_i}\;\Big]
\end{aligned}  
\eeno
\normalsize
for some constant $C$ depending on $T$, the uniform bounds on $A$ and $\Gamma$ and  the constants appearing in assumption  (H2). For any $i=n,\ldots,N-1$ one can easily prove that 
$$
\E\Big[\widehat X^{t_n,x,\alpha}_{t_i} \widehat  Y^{t_n,y,\nu}_{t_i}\Big] \leq xy\, C e^{CT},
$$
for some possibly different constant $C\geq 0$, so that it is possible to conclude that there exists some $C\geq 0$ such that 
$$
\underset{\nu\in\mathcal V^\Gamma_h}\sup\, \E\left[ \widehat X^{t_n,x,\alpha}_T \widehat Y^{t_n,y,\nu}_T\right] \leq  xy\left(1+Ch\right) \qquad \forall x,y \geq 0, n=0,\ldots,N,  \, \alpha\in \mathcal A.
$$
}

We conjecture that a similar approximate lower bound also holds. This finds a confirmation in our numerical tests (see Tables \ref{tab:test2Local} and \ref{tab:test2GlobalN54} in Section \ref{sec:num}) where at least first order of convergence in $h$  of the numerical duality gap is observed. However the rigorous prove of the result involves delicate convex analysis arguments 
and we plan to investigate this point in future work.
%\item \textit{A priori} error estimates if a duality relation holds not only for the continuous problem, but also for the approximated value functions $\VF$ and $\tVF$. Indeed, if  we assume to be able to prove that the relation 
%$$
%\VF(t_n,x_i)=\inf_{y_j\in \mathcal G_{\Delta x}}\big\{\tVF(t_n,y_j)+x_iy_j\big\}
%$$
%holds up to some error lower or equal than $|(h,\Delta x)|^\beta$, we obtain  {a priori} bound of order $\beta$ on the error $v-\VF$.

{%\color{blue}
\subsection{The case of non Lipschitz utility functions}
\label{subsec:non-Lip}
We assumed for the results above that the primal (and, where applicable, dual) utility functions $U$ (and $\widetilde U$) are Lipschitz continuous
(see Propositions \ref{prop:lower}, \ref{prop:upper}, and Theorem \ref{eq:error}).
This is a standard assumption in the numerical literature, including our previous work  \cite{PicaReisi18_prob} which we draw on here.
This property is, however, not satisfied by commonly used utility functions in finance, such as the power utility $U(x)= x^p/p$, $x\ge0$, with $p\in (0,1)$,
or the dual of the exponential utility.
To deal with such cases, we introduce a further approximation of the problem and consequently have to estimate an additional error contribution.
%However, this will not strongly affect our result, as we outline now. \\

We assume first, in addition to (H4), that $U$ is bounded (from below) at $0$.
Letting $\rho,c_0>0$ and $x_\rho=c_0/\rho$, $y_\rho=\rho$, we define 
\begin{eqnarray}
\label{LipU}
U_\rho(x):=\left\{\begin{array}{ll}
U(0)+\frac{ U(x_\rho)-U(0)}{x_\rho}x& \text{if }\quad 0 \leq x \le x_\rho, \\
U(x) & \text{if }\quad x_\rho < x \leq y_\rho, \\
 U(y_\rho) & \text{if }\quad x > y_\rho,
\end{array}
\right.%\qquad \forall y\geq 0.
\end{eqnarray}
so that $U_\rho$ is Lipschitz with Lipschitz constant $L_\rho :=(U(x_\rho)-U(0))/x_\rho$ and $U_\rho \to U$ as $\rho \to +\infty$ (uniformly on compact sets).

We denote by $v_\rho$, $V_\rho$ and $\VF_\rho$  the value function and the numerical approximations defined respectively by \eqref{eq:value_v},  \eqref{eq:SL_V} and \eqref{eq:SL_V_fully}, replacing $U$ with $U_\rho$.
Observe that as $U$ is concave and therefore $U_\rho \le U$, one has for any $t\in [0,T], x\geq 0$
\be\label{eq:ineqRHO}
v_\rho(t,x)\leq v(t,x).
\ee
 
Let $\widetilde U_\rho$ be the convex conjugate of the approximated utility function $U_\rho$, i.e.
$$
\widetilde U_\rho(y) :=\sup_{x\geq 0}\;\{U_\rho(x) - xy\}.
$$
We denote by $\widetilde v_\rho$, $\widetilde V_\rho$ and ${\tVF}_\rho$ the value function and the numerical approximations obtained respectively by \eqref{eq:value_tv}, \eqref{eq:SL_tV} and \eqref{eq:SL_tV_fully}, replacing $\widetilde U$ with $\widetilde U_\rho$.
Observe that $\widetilde U_\rho:[0,+\infty)\to\R$ is decreasing and Lipschitz continuous with constant $\widetilde L_\rho :=y_\rho$. Moreover, it follows by the very definition of $U_\rho$ that $\widetilde U_\rho (y)=0$ for $y\geq L_\rho$.
%Under assumptions ($\widetilde{\text{H1}}$)-($\widetilde{\text{H3}}$), the regularization procedure presented in Section \ref{sec:regulariz} can be applied to the value function \eqref{eq:value_dual_general} associated with the dual problem. 

\begin{remark}
The modified utility function $U_\rho$ is not of class $C^1$, however the discussion in \cite[Section 6.5]{Rogers} can be used to show that \eqref{eq:duality_CL} also holds for $v$ and $\widetilde v$ replaced by $v_\rho$ and $\widetilde v_\rho$, i.e.
\be\label{eq:duality_CL_rho}
v_\rho(t,x)=\underset{y>0}\inf\big\{\widetilde v_\rho(t,y)+xy\big\}.
\ee
\end{remark}

The following large deviations-type argument is needed to estimate the error of this Lipschitz continuous approximation.

\begin{lem} \label{lem:large_dev}
Consider an $\mathbb{R}$-valued process $p$ and  an $\mathbb{R}^d$-valued process $q$, both progressively measurable %adapted to $\mathbb{F}_t$ and 
with $\int_0^T |p_s| \, ds \le \mu T$ and $\int_0^T |q_s|^2 \, ds \le \gamma^2 T$ a.s., respectively, for some constants $\mu, \gamma \ge 0$, and let
\[
%\be\label{eq:def_ld}
X_t = x \exp\left(\int_0^t p_s \, ds + \int_0^t q_s \, dW_s\right) % - \frac{1}{2} \int_0^t  q_s^2 \, ds\right).
%\ee
\]
for $t\in [0,T]$.
Then
\begin{eqnarray}\label{eq:est_ld1}
\mathbb{P}\left[X_t \ge \rho \right] &\le& 2 \exp\left( - \frac{3}{8 \gamma^2 T} (\log \rho/x - \mu T)^2 \right), \\
\mathbb{P}\left[X_t \le c_0/\rho \right] &\le& 2 \exp\left( - \frac{3}{8 \gamma^2 T} (\log \rho/(c_0 x) - \mu T)^2 \right).
\label{eq:est_ld2}
\end{eqnarray}
Moreover, for each $p>0$, $x>0$ there exists $C>0$ such that
%\cancel{
%\begin{eqnarray}\nonumber
%\mathbb{E}\left[X_t \mathbbm{1}_ {\{X_t \ge \rho\}} \right] 
%= o(\rho^{-p}) \qquad \text{for all} \quad p>0.
%\end{eqnarray}
%}
\begin{eqnarray}
\label{eq:est_ld3}
\mathbb{E}\left[X_t \mathbbm{1}_ {\{X_t \ge \rho\}} \right] 
\le C \, \rho^{-p}
\end{eqnarray}
for all $t\in [0,T]$.
\end{lem}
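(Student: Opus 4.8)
The plan is to reduce all three bounds to a single concentration estimate for the martingale part of $\log X_t$, after peeling off the absolutely continuous drift. Write $\log(X_t/x)=D_t+L_t$ with $D_t:=\int_0^t p_s\,ds$ and $L_t:=\int_0^t q_s\,dW_s$. The drift is controlled pathwise by $|D_t|\le\int_0^T|p_s|\,ds\le\mu T$, whereas $L_t$ is a continuous local martingale with quadratic variation $\langle L\rangle_t=\int_0^t|q_s|^2\,ds\le\gamma^2T$ almost surely. The one fact I need about $L$ is the exponential supermartingale bound: for every $\theta\in\mathbb{R}$ the process $\mathcal E^{(\theta)}_t:=\exp(\theta L_t-\tfrac{\theta^2}{2}\langle L\rangle_t)$ is a nonnegative local martingale, hence a supermartingale, so $\mathbb{E}[\mathcal E^{(\theta)}_t]\le1$ with no a priori integrability assumption on $q$. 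Combined with $\langle L\rangle_t\le\gamma^2T$ this yields $\mathbb{E}[e^{\theta L_t}]\le\exp(\tfrac{\theta^2}{2}\gamma^2T)$.

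For \eqref{eq:est_ld1} I would note that $D_t\le\mu T$ gives $\{X_t\ge\rho\}\subseteq\{L_t\ge a\}$ with $a:=\log(\rho/x)-\mu T$, and then run Chernoff: $\mathbb{P}[L_t\ge a]\le e^{-\theta a}\mathbb{E}[e^{\theta L_t}]\le\exp(-\theta a+\tfrac{\theta^2}{2}\gamma^2T)$ for $\theta>0$. The stated constant is obtained not by optimising but by the explicit sub-optimal choice $\theta=a/(2\gamma^2T)$, which makes the exponent equal to $-\tfrac{3}{8\gamma^2T}a^2$; the prefactor $2$ then absorbs the transitional regime $|a|\lesssim\gamma\sqrt{T}$, where the Chernoff exponent alone would exceed $0$, so that the inequality holds for all relevant $\rho$ (it is used for $\rho\gtrsim x e^{\mu T}$, where $a\ge0$). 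Estimate \eqref{eq:est_ld2} is the mirror image: from $D_t\ge-\mu T$ one gets an inclusion $\{X_t\le c_0/\rho\}\subseteq\{-L_t\ge a'\}$, and since $-L$ is again a continuous martingale with the same bounded quadratic variation, the identical argument applied to $-L$ delivers the lower-tail bound.

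For \eqref{eq:est_ld3} I would integrate the tail. The layer-cake identity $\mathbb{E}[X_t\mathbbm{1}_{\{X_t\ge\rho\}}]=\rho\,\mathbb{P}[X_t\ge\rho]+\int_\rho^\infty\mathbb{P}[X_t>s]\,ds$ reduces the claim, via \eqref{eq:est_ld1}, to the elementary fact that $s\mapsto s\,\exp(-\tfrac{3}{8\gamma^2T}(\log(s/x)-\mu T)^2)$ is integrable near infinity and decays faster than any negative power of $s$: substituting $u=\log s$ turns the exponent into $u$ minus a positive multiple of $u^2$, so the log-Gaussian decay beats every polynomial and one obtains $\mathbb{E}[X_t\mathbbm{1}_{\{X_t\ge\rho\}}]\le C\rho^{-p}$ for a constant $C=C(p,x,\mu,\gamma,T)$ uniform in $t$. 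Equivalently, Cauchy--Schwarz together with the finite second moment $\mathbb{E}[X_t^2]\le x^2\exp(2\mu T+2\gamma^2T)$, from the supermartingale bound at $\theta=2$, and the square root of \eqref{eq:est_ld1} gives the same conclusion.

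The only genuinely delicate step is the concentration bound for $L_t$, and the point is to route it through the supermartingale property of $\mathcal E^{(\theta)}$ rather than through Novikov's condition or a change of measure: this avoids having to verify that $L$ is a true martingale or that $e^{\theta L_t}$ is integrable before the fact. Once that estimate is in place, the separation of the drift, the mirror argument for the lower tail, and the integration step for \eqref{eq:est_ld3} are all routine; the non-optimal choice of $\theta$ is exactly what fixes the constant $3/8$, and the factor $2$ is what makes the two tail bounds valid uniformly in $\rho$.
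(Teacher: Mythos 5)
Your proof is correct, and it reaches the stated bounds by a genuinely different, more self-contained route than the paper. The paper does not run a Chernoff bound on $e^{\theta L_t}$: it applies Markov's inequality to the \emph{squared} exponential $\exp\bigl(\tfrac{\lambda}{2}(\log X_t/x-\int_0^t p_s\,ds)^2\bigr)$, using the moment bound $\mathbb{E}\bigl[\exp\bigl(\tfrac{\lambda}{2}(\int_0^t q_s\,dW_s)^2\bigr)\bigr]\le (1-\lambda\gamma^2 T)^{-1/2}$ for $\lambda\gamma^2T<1$, obtained by following Lemma 2.6 of \cite{hu2018existence}; the single choice $\lambda=3/(4\gamma^2T)$ then produces the exponent $3/(8\gamma^2T)$ and the prefactor $2=(1-\lambda\gamma^2T)^{-1/2}$ simultaneously. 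In your argument the same two constants arise for different reasons: the $3/8$ comes from the deliberately suboptimal Chernoff parameter $\theta=a/(2\gamma^2T)$, and the factor $2$ is pure slack (for $a\ge 0$ your bound holds with constant $1$, and the optimal $\theta$ would even give exponent $a^2/(2\gamma^2T)$). What your route buys is that only the supermartingale property of the stochastic exponential is needed, with no external lemma, and a slightly stronger inequality; what the paper's route buys is the squared-exponential moment bound itself, which is the form in which the estimate is quoted from the literature. One point you handle more carefully than the paper: both arguments implicitly require $\log(\rho/x)\ge\mu T$ (the paper's replacement of $(\log\rho/x-\int_0^t p_s\,ds)^2$ by $(\log\rho/x-\mu T)^2$ is only valid then, just as your Chernoff needs $\theta>0$), and indeed \eqref{eq:est_ld1} can fail for small $\rho$; you state explicitly that the estimate is only claimed and used in the regime $\rho\gtrsim x e^{\mu T}$, which is the regime relevant for the application as $\rho\to\infty$. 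Finally, for \eqref{eq:est_ld3} the paper sums the tail over integer levels $k\ge\lfloor\rho\rfloor$ rather than using the continuous layer-cake identity or Cauchy--Schwarz, but these steps are interchangeable.
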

\begin{proof}
We have, for any $\lambda>0$,
\begin{eqnarray*}
\mathbb{P}\left[X_t \ge \rho \right] &=& \mathbb{P}\left[\exp\left(\frac{\lambda}{2}  \left(\log X_t/x -  \int_0^t p_s \, ds\right)^2\right) \ge 
\exp\left(\frac{\lambda}{2} \left(\log \rho/x - \int_0^t p_s \, ds\right)^2 \right)\right] \\
&\le&
 \mathbb{P}\left[\exp\left(\frac{\lambda}{2}   \left(\log X_t/x -  \int_0^t p_s \, ds\right)^2\right) \ge 
\exp\left(\frac{\lambda}{2} (\log \rho/x - \mu T)^2 \right)\right].
%= \mathbb{P}\left(\exp\left(\frac{\lambda}{2}  \left( \int_0^t q_s \, dW_s \right)\right) \ge \exp\left(\frac{\lambda}{2} (\log L)^2 \right)\right)
\end{eqnarray*}
Following the same steps as in the proof of Lemma 2.6 in \cite{hu2018existence}, we obtain for $\lambda \gamma^2 T < 1$
\[
\mathbb{E}\left[\exp\left(\frac{\lambda}{2}  \left( \int_0^t q_s \, dW_s \right)^2\right)\right] \le \frac{1}{\sqrt{1- \lambda \gamma^2 T }},
\]
and hence from Markov's inequality
\[
\mathbb{P}\left[X_t \ge \rho \right] \le \frac{\exp\left(-\frac{\lambda}{2} (\log \rho/x - \mu T)^2 \right)}{\sqrt{1-\lambda \gamma^2 T}}.
\]
Choosing $\lambda = 3/(4 \gamma^2 T)$ we obtain (\ref{eq:est_ld1}).

The second statement (\ref{eq:est_ld2}) follows immediately by replacing $X_t$ by $1/X_t$, $x$ by $1/x$, $(p,q)$ by $(-p,-q)$, and $\rho$ by $\rho/c_0$.

Finally, the last estimate is obtained from
\[
\mathbb{E}\left[X_t \mathbbm{1}_ {\{X_t \ge \rho\}} \right] 
\le
\sum_{k=\lfloor \rho\rfloor}^\infty (k+1) \mathbb{P}(X_t \in [k,k+1)) 
=
(\lfloor \rho\rfloor + 1)   \mathbb{P}(X_t \ge \lfloor \rho\rfloor ) +
\sum_{k=\lfloor \rho\rfloor}^\infty \mathbb{P}(X_t \ge k) ,
\]
and estimating each term by substituting $\lfloor \rho\rfloor$ and $k$ into (\ref{eq:est_ld1}).
\end{proof}

%Let $\tilde{v}_\rho$ the value function given by \eqref{eq:value_tv} replacing $\widetilde{U}$ with $\widetilde{U}_\rho$.

Let $G^h_{\rho}, I^h_\rho, G^{h,\Delta x}_{\rho}, I^{h,\Delta x}_{\rho}$ denote the quantities defined by  \eqref{eqn:gap} and \eqref{eqn:gap_fully} replacing $V, \widetilde V, W, \widetilde W$ by $V_\rho, \widetilde V_\rho, W_\rho, \widetilde W_\rho$. We then obtain the following extension of Theorem \ref{eq:error} to the general case of non Lipschitz utility functions.

%{(AP: theorem below should be ok if we can argue by large deviation on $v$ even considering unbounded controls)}
\begin{teo}\label{eq:errorRHO}
Let assumptions (H1) to (H4) be satisfied. Then, there exist some constants $C, \widetilde C\geq 0$ 
{and $\delta: \mathbb{R}^+\!\! \rightarrow \mathbb{R}^+$\! with $\delta(x,\rho) = o(\rho^{-p})$ for all $x$ as $\rho\rightarrow\infty$ for all $p>0$},
such that for any $n=0,\ldots, N$, {$x>0$}
\begin{align}
\label{eq:boundsRHO}
\begin{split}
-L_\rho C (1+x^{2M}) h^{(M-1)/2M}\leq v(t_n,x)- V_\rho(t_n,x) \\
%\hspace{5.5 cm} 
\leq G^{h}_\rho(t_n,x) +\widetilde L_\rho {\widetilde C} (1+(I^{h}_\rho(t_n,x))^{2M}) h^{(M-1)/2M} {+\delta(x,\rho)}
\end{split}
\end{align}
and for any $n=0,\ldots, N$, $m\in\N^{+}$
\begin{equation}\label{eq:error_fullyRHO}
\begin{split}
-L_\rho C (1+x_m^{2M})\Big(h^{(M-1)/2M}+  \Delta x/h \Big) \leq v(t_n,x_m)-\VF_\rho(t_n,x_m)
\\ \leq G^{h,\Delta x}_\rho(t_n,x_m) +\widetilde L_\rho {\widetilde C} (1+(I^{h,\Delta x}_\rho(t_n,x_m))^{2M})\Big(h^{(M-1)/2M}+  \Delta x/h \Big){+\delta(x,\rho)}.
\end{split}
\end{equation}
\end{teo}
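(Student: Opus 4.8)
The plan is to reduce the non-Lipschitz statement to the Lipschitz Theorem \ref{eq:error}, applied to the truncated utility $U_\rho$, and then to absorb the additional error $v-v_\rho$ produced by replacing $U$ with $U_\rho$ into the term $\delta(x,\rho)$, using the tail estimates of Lemma \ref{lem:large_dev}. Throughout I work under the standing extra assumption of this subsection that $U$ is bounded from below at $0$.

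The first step is to re-run the proof of Theorem \ref{eq:error} verbatim with $U,\widetilde U$ replaced by $U_\rho,\widetilde U_\rho$. This is legitimate: its three ingredients remain valid even though $U_\rho$ is neither $C^1$ nor strictly increasing, because the conjugate relation is now \eqref{eq:duality_CL_rho}, while Propositions \ref{prop:lower} and \ref{prop:upper} require only (H1)--(H3) together with Lipschitz continuity of the relevant utility ($U_\rho$ has constant $L_\rho$, and $\widetilde U_\rho$ has constant $\widetilde L_\rho=y_\rho$), and their coefficient-regularity estimates do not involve the utility at all. Since the constants $C,\widetilde C$ depend only on $M,T,K_0,K_1,\eta$ and the bounds on $A,\Gamma$, they are independent of $\rho$. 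This yields
\[
-L_\rho C(1+x^{2M})h^{(M-1)/2M}\le v_\rho(t_n,x)-V_\rho(t_n,x)\le G^h_\rho(t_n,x)+\widetilde L_\rho\widetilde C(1+(I^h_\rho(t_n,x))^{2M})h^{(M-1)/2M},
\]
together with its fully discrete counterpart for $v_\rho-\VF_\rho$, $G^{h,\Delta x}_\rho$, $I^{h,\Delta x}_\rho$. The lower bounds in \eqref{eq:boundsRHO} and \eqref{eq:error_fullyRHO} are then immediate: by \eqref{eq:ineqRHO} one has $v\ge v_\rho$, hence $v-V_\rho\ge v_\rho-V_\rho$ and similarly in the discrete case. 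For the upper bounds I decompose $v-V_\rho=(v-v_\rho)+(v_\rho-V_\rho)$, control the second summand by the display above, and reduce the theorem to showing $v-v_\rho\le\delta(x,\rho)$ with $\delta(x,\rho)=o(\rho^{-p})$ for every $p>0$.

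To estimate $v-v_\rho$ I use $\sup_\alpha\E[U(X^\alpha_T)]-\sup_\alpha\E[U_\rho(X^\alpha_T)]\le\sup_{\alpha\in\cA}\E_{t,x}[(U-U_\rho)(X^\alpha_T)]$, and observe that $U-U_\rho\ge0$ vanishes on $[x_\rho,y_\rho]$, so only the two tail events contribute. On $\{X^\alpha_T\le x_\rho\}$, monotonicity gives $0\le U-U_\rho\le U(x_\rho)-U(0)$ (finite by boundedness of $U$ at $0$), so this part is at most $(U(x_\rho)-U(0))\,\P[X^\alpha_T\le c_0/\rho]$, estimated by \eqref{eq:est_ld2}. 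On $\{X^\alpha_T>y_\rho\}$, concavity of $U$ yields $U(X^\alpha_T)-U(y_\rho)\le U'(\rho)\,X^\alpha_T$, so this part is at most $U'(\rho)\,\E[X^\alpha_T\mathbbm{1}_{\{X^\alpha_T>\rho\}}]$, estimated by \eqref{eq:est_ld3}. I then set $\delta(x,\rho)$ to be the supremum over $\alpha$ of the sum of these two contributions.

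The main point, and the only genuinely delicate one, is that these bounds must be uniform in $\alpha\in\cA$ so that the supremum may be taken. Writing $X^\alpha_s=x\exp\!\big(\int_t^s p_u\,du+\int_t^s q_u\,\mathrm{d}B_u\big)$ with $p,q$ the drift and diffusion coefficients of $\log X^\alpha$ from the representation $X_\cdot=\exp(Z_\cdot)$ in Section \ref{sec:setting}, assumption (H1) (boundedness of $A$) together with (H2) (boundedness of $r,b,\sig$ and $|g(\cdot,a)|\le K_1|a|$) furnishes constants $\mu,\gamma$, independent of $\alpha$, with $\int_0^T|p_s|\,\mathrm{d}s\le\mu T$ and $\int_0^T|q_s|^2\,\mathrm{d}s\le\gamma^2 T$ almost surely. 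Hence Lemma \ref{lem:large_dev} applies with $\alpha$-independent constants: the lower-tail term decays like $\exp(-c(\log\rho)^2)$, and the upper-tail term is $o(\rho^{-p})$ for every $p$ (apply \eqref{eq:est_ld3} with exponent $p+1$ and use $U'(\rho)\to0$). Their sum therefore satisfies $\delta(x,\rho)=o(\rho^{-p})$ for all $p>0$, completing the upper bound in \eqref{eq:boundsRHO}. The fully discrete inequality \eqref{eq:error_fullyRHO} follows identically, using the fully discrete part of the first step for $v_\rho-\VF_\rho$ while the bound on $v-v_\rho$ is unchanged.
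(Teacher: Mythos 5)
Your proposal is correct and follows essentially the same route as the paper's own proof: lower bounds from $v\ge v_\rho$ together with Proposition \ref{prop:lower} applied to $U_\rho$, the tail decomposition of $U-U_\rho$ combined with Lemma \ref{lem:large_dev} (uniformly in $\alpha$, using boundedness of $A$) to get $v-v_\rho=o(\rho^{-p})$, and the duality \eqref{eq:duality_CL_rho} with Proposition \ref{prop:upper} for $\widetilde U_\rho$ to conclude the upper bounds. The only difference is organizational (you first establish the full $\rho$-analogue of Theorem \ref{eq:error} and then add $\delta(x,\rho)$, whereas the paper threads the $\delta$ term directly through the duality inequality), and your explicit attention to uniformity in $\alpha$ and to the $U(0)$ term is a slightly more careful rendering of steps the paper treats as immediate.
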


\begin{proof}
{Let us consider for simplicity the semi discrete case}. The lower bounds follow by \eqref{eq:ineqRHO} applying Proposition \ref{prop:lower} to the value function $v_\rho$.
As $A$ is bounded, the definition of $X^{t,x,\alpha}_\cdot$ from (\ref{eq:SDE}) satisfies the assumptions on the coefficients in
Lemma \ref{lem:large_dev}. We therefore get immediately
\begin{eqnarray*}
%\label{eq:approx}
0 \le v(t,x)-v_\rho(t,x) &=& \sup_{\alpha\in\cA}\,\E_{t,x}\big[U(X^\alpha_T)\big] - \sup_{\alpha\in\cA}\,\E_{t,x}\big[U_\rho(X^\alpha_T)\big] \\
&\le& \sup_{\alpha\in\cA}\,\E_{t,x}\big[U(X^\alpha_T) - U_\rho(X^\alpha_T)  \big] \\
&\le& U(c_0/\rho) \sup_{\alpha\in\cA}\ \mathbb{P}\left[X^{t,x,\alpha}_T \le c_0/\rho \right] + 
U'(\rho) \sup_{\alpha\in\cA}\ \mathbb{E}_{t,x}\left[(X^\alpha_T-\rho) \mathbbm{1}_ {\{X^\alpha_T\ge \rho\}} \right] \\
&=& o(\rho^{-p})
\end{eqnarray*}
for all $p$ and all $x$.
Thanks to the duality property \eqref{eq:duality_CL_rho}, one has 
\begin{align*}
v(t_n,x) & \leq \inf_{y>0}\left\{ \widetilde v_\rho(t_n,y) + xy\right\} {+\delta(x,\rho)}.
\end{align*}
Applying Proposition \ref{prop:upper}, one has 
\begin{align*}
 \widetilde v_\rho(t_n,y)\leq  \widetilde V_\rho (t_n,y) + xy + \widetilde L_\rho \widetilde C (1 + y^{2M})h^{(M-1)/2M},
 \end{align*}
 so that arguing as in the proof of Theorem \ref{eq:error} we get the upper bounds.
\end{proof}

\begin{remark}
The previous error estimates clearly depend on the parameter $\rho$ and the utility function $U$ via the Lipschitz constant $L_\rho$.
In the case of power utility, we have $L_\rho = x_\rho^{p-1}/p = c_0^{p-1}/p \, \rho^{1-p}$. As $\delta(x,\rho)$ goes to zero faster than any power of $1/\rho$,
we can choose $\rho = h^{-r}$ for arbitrarily small positive $r$ and therefore obtain an order of $L_\rho h^{(M-1)/2M} + \delta(x,\rho)$ arbitrarily close to the Lipschitz case, i.e.\
$(M-1)/2M$.
\end{remark}

\begin{remark}
The above result can also be extended to cases where $\lim_{x\downarrow 0} U(x) = - \infty$, by considering
$U_\rho(x) = U(x_\rho) + U'(x_\rho) (x-x_\rho)$ for $x \in [0,x_\rho]$.
Then we can estimate $\mathbb{E}\left[(U_\rho(X_t)-U(X_t)) \mathbbm{1}_ {\{X_t \ge \rho\}} \right] $ similar to the proof of Lemma \ref{lem:large_dev},
as long as $U$ does not grow more than, e.g., exponentially in $-1/x$ as $x\rightarrow 0$. This is in particular satisfied by the commonly used log-utility.
\end{remark}
}

\section{Numerical tests}\label{sec:num}

We test our theoretical results on some concrete examples numerically. We consider $d=1$ and the computational domain  $[0,x_{\max}]$.
We denote by $N$ and $J$ the number of time and space steps, respectively, i.e.
$$
h = \frac{T}{N}\qquad \text{and}\qquad \Delta x = \frac{x_{\max}}{J}.
$$
We study the case of a power utility function:
\begin{eqnarray}
\label{power}
U(x)=\frac{x^p}{p}\qquad \text{for some } p\in (0,1).
\end{eqnarray}
We consider the modification $U_\rho$ of the utility function obtained in (\ref{LipU}), for $\rho = 18$ and $c_0 = 8$. The utility function $U$ for $p=0.5$  and its conjugate $\widetilde U$, as well as its Lipschitz continuous approximation $U_\rho$ and its conjugate $\widetilde U_\rho$ are shown in Figure \ref{fig:utilities}. 
%In order to not further complicate the notation, we simply denote by $V$ and $\widetilde V$
%({\sout{instead of $W_{\rho}$, $\widetilde V_{\rho,F}$ or similar} instead of $W_{\rho}$, $\widetilde W_{\rho}$}) 
%the fully discrete numerical solutions of the primal and dual problem associated respectively with $U_\rho$ and $\widetilde U_\rho$.
\begin{figure}
\includegraphics[width=0.65\textwidth]{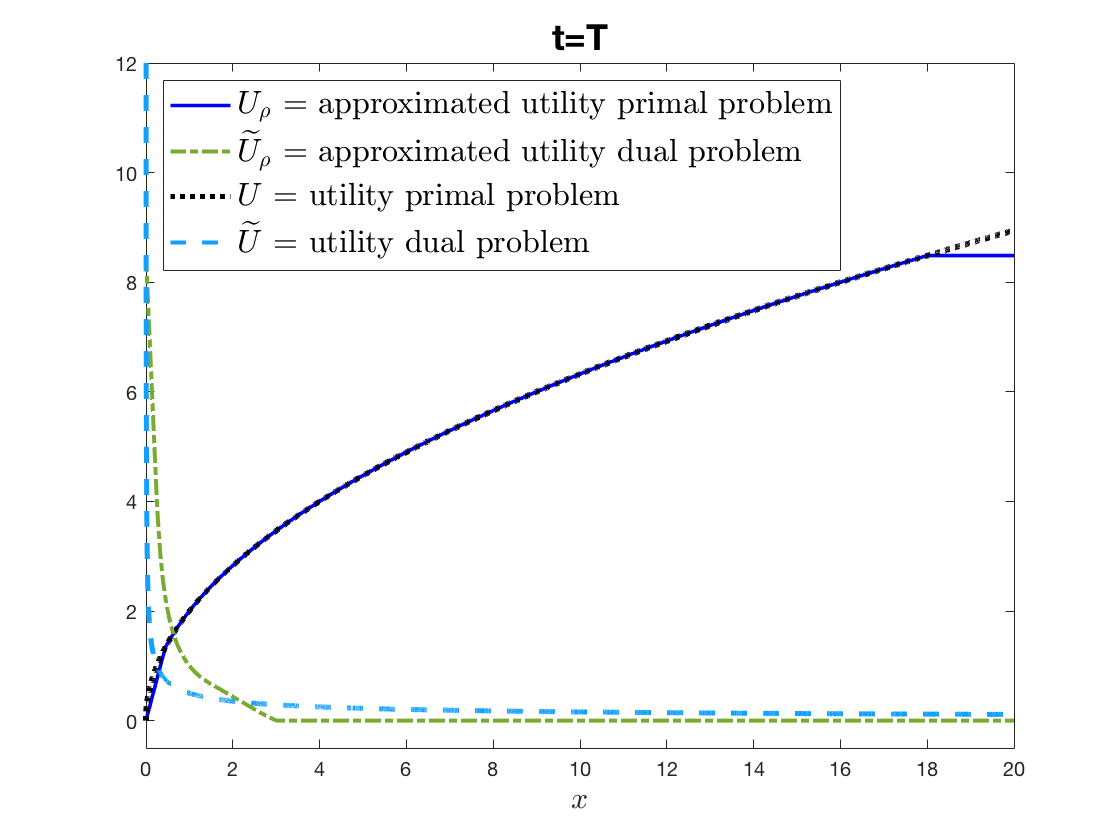}
\caption{
The power utility function $U$ (in dotted black) with its conjugate $\widetilde U$ (in dashed cyan) together with the Lipschitz continuous approximation $U_\rho$ (in solid blue) and its conjugate $\widetilde U_\rho$ (in dash-dotted green). Here, $x_{\max} =20$, $\rho = 18$ and $c_0 = 8$.}\label{fig:utilities}
\end{figure}

%\cgreen{\textit{CR: Removed Remark environments for the following text.}}

In our tests, we take $M=4$ with $\Delta x \sim h^{11/8}$ obtained from \eqref{eq:error_fullyRHO} { balancing the error terms}, more specifically $J\sim \lceil N^{11/8} \rceil$.
%\begin{remark}
Taking $M>2$ has only (theoretical) advantages for non-smooth solutions,
while we would observe order of convergence at most one for any choice of $M\geq 2$, even in the smooth case.
This is due to the fact that, even in the case of smooth solutions, the use of the Euler-Maruyama scheme reduces the order of consistency of the overall scheme to one
(noting that a modified proof utilising the higher weak order 1 of the Euler-Maruyama scheme, compared to the strong order 1/2, can be used in the smooth case), 
regardless of the value of $M$. 
An improvement of the order of consistency might be achieved by combining higher values of $M$ with the use of higher order time-stepping schemes, for instance the higher order Taylor schemes of \cite{KloPla}.
%\end{remark}

%\begin{remark}
%\label{rem:cntrst}
For the optimization over the controls in our computations, we truncate $A$ and $\Gamma$ first to a finite interval{, if necessary,} and then discretise the interval
by $N_a$ and $N_\gamma$ equally spaced mesh points, respectively. As already pointed out in {Section \ref{sec:approx_dual} and Remark \ref{rem:controls}}, this further approximation decreases the value of the discrete primal (maximisation) problem and increases the value of the discrete dual (minimisation) problem, in the same way as the piecewise constant (in time) control approximation does. This implies that this component of the error is captured in the numerical duality gap which we compute \textit{a posteriori}.
The approximation can generally only be improved by increasing the size of the control intervals and decreasing the control mesh spacing, concurrently with decreasing $h$ and $\Delta x$.

As the optimal control in our examples is bounded, the error of the control truncation is zero if the interval is chosen large enough.
It is seen from the computations that the contribution of the control discretisation error is small, decreasing quadratically in $N_a^{-1}$ and $N_\gamma^{-1}$ since we have a smooth dependence of the Hamiltonian on the control.
In our tests, we take $N_a \sim N_\gamma \sim N$, such that the control discretisation error becomes eventually negligible.
%\end{remark}

%\begin{remark}
{
As the point $x_m$ approaches $0$ or $x_{\max}$, it may happen that $\widehat X^{t_n,x_m}_{t_{n+1}}$ oversteps the domain $(0,x_{\max})$.
In this case,
we use linear extrapolation in order to define $W_\rho$ and $\widetilde W_\rho$ outside the computational mesh.
More precisely, one can verify that, due to the boundedness of the control and coefficients, the process $X_\cdot$ from (\ref{eq:SDE}) never reaches 0 for $x>0$ and equation (\ref{HJB}) holds up to the left boundary.
From equation (\ref{eq:SL_V_fully}), it is clear that for $m = 0$, the argument of the expression on the right-hand side is  $(t_{n+1},0)$,
so that $W_\rho(t_{n},0) =W_\rho (t_{n+1},0)$ for all $n$, at the boundary point.
For $m>0$ and $h$ small enough, the argument is $(t_{n+1},x)$ for some $x>0$. If $x<x_{\max}$, i.e.\ $x$ in some interval $(x_k,x_{k+1}]$, $k\ge 0$, in the interior of the domain,
the value can be obtained by linear interpolation from $W_\rho(t_{n+1},x_k)$ and $W_\rho(t_{n+1},x_{k+1})$.
In the rare case that $x<0$ (for larger $h$) we extend $W_\rho(t_{n+1},\cdot)$ in (\ref{eq:SL_V_fully}) by linear extrapolation from $[x_0,x_1]$ to negative $x$. As this is only needed for
$h$ above a certain threshold, it does not affect our estimates.
In the case $x>x_{\max}>\rho$ (where we choose $x_{\max}$ and $\rho$ so that the second inequality holds),
we can set $W_{\rho}(t_{n+1},x) = U_{\rho}(\rho)$, where we have exactly $v_{\rho}(t_{n+1},x)= U_{\rho}(\rho)$ if $x_{\max}$ is large enough because of the constancy of the solution for
large $x$. A similar argument holds for the dual problem.
}
%\cgreen{\textit{CR: explain upper boundary.}}

%{\color{magenta} AP: to be added index $\rho$ to $G^{h,\Delta x}$.}

\subsection*{Test 1: Merton problem}
We first study the classical Merton problem. This corresponds to the dynamics \eqref{eq:SDE} with $g\equiv 0$, constant coefficients $b,r,\sig$ and $A=\R$. It is well known that for this problem there exists a closed-form solution given by (see, e.g.\ \cite{Pham_book})
$$
v(t,x)=\exp\Big\{t\Big(a^*(b-r)+r-\frac{1}{2} (a^*)^2 (1-p)\sig^2\Big)\Big\} U(x),
$$
where $U$ and $p$ are given in (\ref{power}), and
$$
a^*:=\frac{(b-r)}{\sig^2(1-p)}
$$
is the optimal control. We recall that in this case the dual problem is linear and no optimisation is necessary since $\Gamma = \{ 0 \}$.
The values of the coefficients used in the test is given in  Table \ref{tab:data}. For these values, setting $A=[-1, 1]$ is sufficient to have $a^*\in A$.\\
\begin{table}[!hbtp]
\centering
\begin{tabular}{|c|c|c|c|c|c|}
\hline 
$p$ & $r$  & $b$ & $\sig$ &  $T$  & $x_{\max}$\\
\hline
$0.5$ & $0.8$ & $1.2$  & $1$ & $0.5$  & $20$\\
\hline
\end{tabular}
\caption{Test 1: Parameters used in numerical experiments.}
\label{tab:data}
\end{table}\\
Table \ref{tab:test1solExLocalN54} reports the error and the estimated convergence rate of $W_\rho$ to the exact solution $v$  of the primal problem. As expected, the order of convergence is around 1. It is important to notice that continuing to refine the mesh without increasing $\rho$, we cannot get convergence to $v$. In fact, the probability in  \eqref{eq:est_ld1} and \eqref{eq:est_ld2}, even if small at points $x$ far from the boundaries of the domain, is different from zero everywhere (see also Figure~\ref{fig:test1error}, left).
To reduce the contribution to the error coming from the term in $\rho$ we compute the error locally, away from the boundary of the computational domain.
%\cgreen{\textit{CR: Is $\rho=12$ here? Before 18.}}

In Table \ref{tab:test2Local}, we report the numerical duality gap, i.e.\ the quantity $G_\rho^{h,\Delta x}(T,x)$. This quantity also decreases with order 1 or even slightly higher. In this case, the duality gap is bigger than the error, but of the same order. % \fbox{do we expect this?}. 
In Figure \ref{fig:test1error} (right) we show the numerical solutions $W_\rho$ and $\widetilde W_\rho$ of the primal and the dual problem, together with the convex conjugate of $\widetilde W_\rho$. 
%Close to $x=0$ we do not observe convergence due to the local modification of $U$.
%fact that the optimal control associated with $U_\rho$ becomes unbounded, so that taking into account only $A=[-1,1]$, we introduce some error.
%\fbox{Perhaps an alternative explanation can be made via $\rho$ alone?}

%$$
%p=\frac{1}{2},\quad r=0.8,\quad b=1.2,\quad \sigma= %1\quad\text{and}\quad A=[-1,1] \quad\text{(so that $\bar \alpha\in A$)}.
%$$  
\begin{table}[!htbp]
\begin{tabular}{cc|ccccccc}
$J$ & $N$ & Error $L^1$ & Order $L^1$ & Error $L^2$ & Order $L^2$ & Error $L^\infty$ & Order $L^\infty$ & CPU (s)\\
\hline
  18 &    8  & 1.96E-01 &  -  & 1.86E-01 &  -  & 1.77E-01 &  - & 0.30\\
   46 &   16  & 1.44E-01 &  0.44  & 1.12E-01 &  0.74  & 1.05E-01 &  0.75 & 1.05 \\
  118 &   32  & 5.85E-02 &  1.30  & 4.54E-02 &  1.30  & 5.86E-02 &  0.84 & 3.91\\
  305 &   64  & 1.52E-02 &  1.94  & 1.14E-02 &  2.00  & 1.52E-02 &  1.95 & 15.54 \\
  790 &  128  & 5.70E-03 &  1.42  & 4.11E-03 &  1.47  & 4.76E-03 &  1.67 & 61.95 \\
 2048 &  256  & 2.35E-03 &  1.28  & 1.68E-03 &  1.29  & 1.74E-03 &  1.45 & 467.54\\
 5312 &  512  & 1.12E-03 &  1.07  & 8.14E-04 &  1.04  & 9.18E-04 &  0.92 &  2169.45\\
%  13778 & 1024  & 6.32E-04 &  0.83  & 4.80E-04 &  0.76  & 6.84E-04 &  0.42 \cred{(remove this line?)} & 13073.4 

 % Old table M=2 (as submitted in SINUM)
% 14 &    8  & 2.90E-01 &  -  & 2.43E-01 &  -  & 2.03E-01 &  - & \\
%  32 &   16  & 1.56E-01 &  0.90  & 1.40E-01 &  0.80  & 1.34E-01 &  0.60 &   \\
%   77 &   32  & 6.64E-02 &  1.23  & 6.70E-02 &  1.06  & 8.51E-02 &  0.66 & \\
%  182 &   64  & 1.83E-02 &  1.86  & 1.94E-02 &  1.79  & 3.09E-02 &  1.46  &\\
%  431 &  128  & 6.70E-03 &  1.45  & 6.78E-03 &  1.51  & 9.61E-03 &  1.68  &\\
% 1024 &  256  & 2.52E-03 &  1.41  & 2.59E-03 &  1.39  & 3.90E-03 &  1.30  &\\
% 2436 &  512  & 1.02E-03 &  1.31  & 1.04E-03 &  1.31  & 1.63E-03 &  1.26  &\\
% 5793 & 1024  & 4.27E-04 &  1.25  & 4.42E-04 &  1.24  & 7.57E-04 &  1.11 &\\
 \end{tabular}
 \caption{\label{tab:test1solExLocalN54} Test 1: Local ($x\in [1,2]$) errors and convergence order comparing $W_\rho$ with the exact solution $v$, for $M=4$ (Gau\ss-Hermite quadrature points), $N=4\cdot 2^{k}$ (time steps), $J = \lceil N^{11/8}\rceil $ (space steps), $N_a=2^{k} +1$ (discrete controls), for $k=1,2,\ldots, 8$.}
 \end{table}

\begin{table}[h]
\begin{tabular}{cc|cccccccc}
$J$ & $N$ &Gap $L^1$ & Order $L^1$ & Gap $L^2$ & Order $L^2$ & Gap $L^\infty$ & Order $L^\infty$ & CPU (s)\\
\hline 
 18 &    8  & 2.17E+01 &  -  & 7.17E+00 &  -  & 3.22E+00 &  - & 0.56\\
 46 &   16  & 1.24E+01 &  0.80  & 4.04E+00 &  0.83  & 1.65E+00 &  0.96 & 1.41\\
 118 &   32  & 7.24E+00 &  0.78  & 2.31E+00 &  0.80  & 9.24E-01 &  0.88 & 4.70\\
305 &   64  & 3.92E+00 &  0.89  & 1.26E+00 &  0.88  & 5.06E-01 &  0.87 & 17.98\\
 790 &  128  & 1.87E+00 &  1.07  & 6.03E-01 &  1.06  & 2.43E-01 &  1.06 & 110.56\\
2048 &  256  & 7.16E-01 &  1.38  & 2.37E-01 &  1.35  & 1.00E-01 &  1.28 & 656.69\\
 5312 &  512  & 1.72E-01 &  2.05  & 5.53E-02 &  2.10  & 2.20E-02 &  2.19 & 2813.47\\
 13778 & 1024  & 5.97E-02 &  1.53  & 1.94E-02 &  1.51  & 8.05E-03 &  1.45 & 17059.00
 % Old table M=2 (as submitted in SINUM)
% 14 &    8  & 4.17E+00 &  - & 3.49E+00 &  -  & 2.92E+00 & -\\
%   32 &   16  & 1.99E+00 &  1.07  & 1.80E+00 &  0.95  & 1.84E+00 &  0.67  \\
%   77 &   32  & 7.66E-01 &  1.38  & 7.61E-01 &  1.24  & 8.78E-01 &  1.07  \\
%  182 &   64  & 4.00E-01 &  0.94  & 4.08E-01 &  0.90  & 5.10E-01 &  0.78  \\
%  431 &  128  & 1.83E-01 &  1.12  & 1.87E-01 &  1.13  & 2.50E-01 &  1.03  \\
% 1024 &  256  & 5.72E-02 &  1.68  & 6.00E-02 &  1.64  & 8.97E-02 &  1.48  \\
% 2436 &  512  & 1.69E-02 &  1.76  & 1.73E-02 &  1.79  & 2.42E-02 &  1.89  \\
% 5793 & 1024  & 6.40E-03 &  1.40  & 6.58E-03 &  1.40  & 9.08E-03 &  1.41 \\
  \end{tabular}
\caption{\label{tab:test2Local} Test 1: Global ($x\in [0,x_{\max}]$) duality gap $G_\rho^{h,\Delta x}$ from (\ref{eqn:gap_fully})
%between $V$ and $\inf_y\{\tilde V(t,y)+xy\}$ 
and related convergence order, for $M=4$ (Gau\ss-Hermite quadrature points), $N=4\cdot 2^{k}$ (time steps), $J = \lceil N^{11/8}\rceil $ (space steps), $N_a= 2^{k}+1$ (discrete controls), for $k=1,2,\ldots, 8$.}
\end{table}
\begin{figure}
\includegraphics[width=0.47\textwidth]{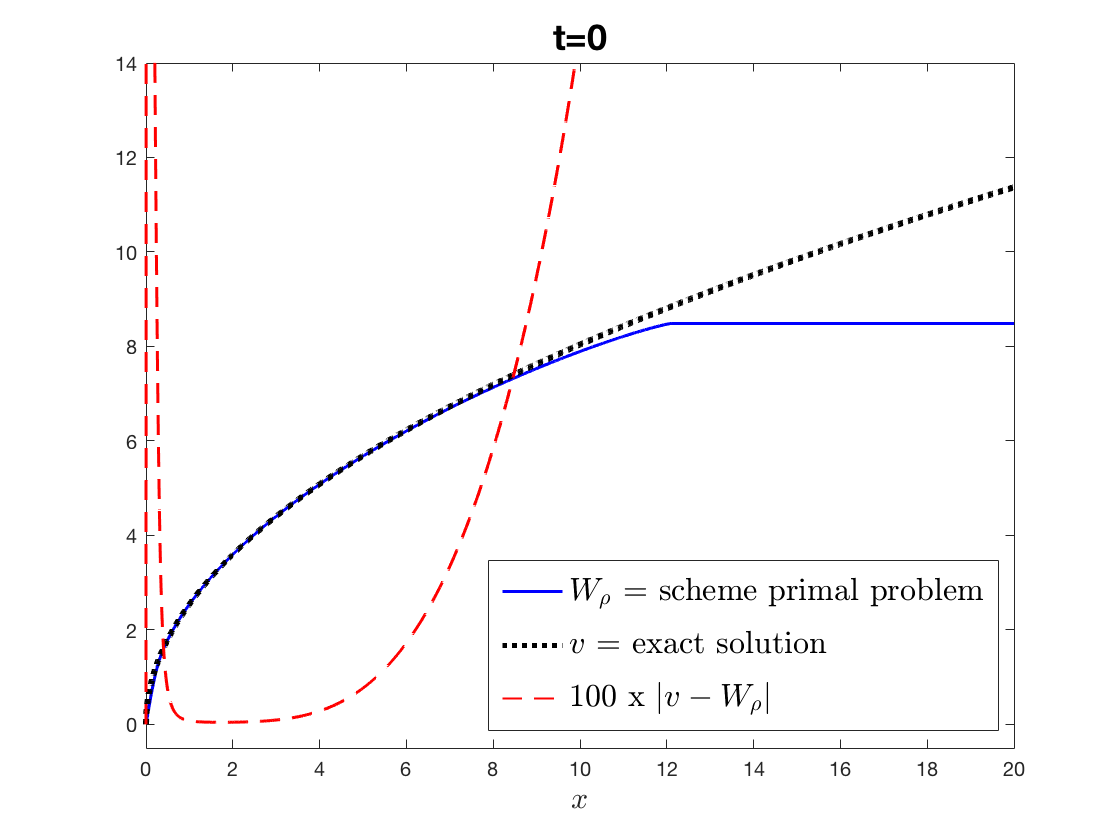}
\includegraphics[width=0.47\textwidth]{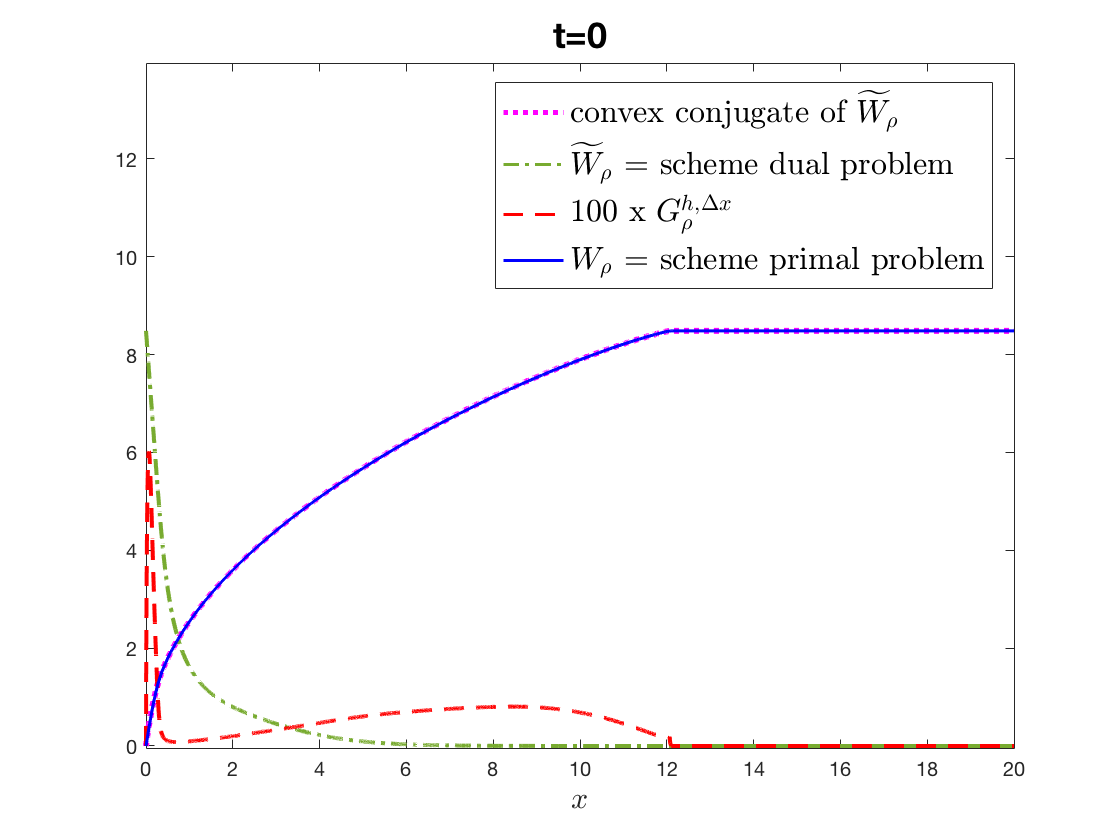}
\caption{Test 1: Numerical solution $W_\rho$ (in solid blue)  compared with the exact solution (dotted black, left) and the convex conjugate of $\widetilde W_\rho$ (dotted magenta, right). The dashed red line represents the error (left) and the numerical duality gap (right), multiplied by a factor 100. The dash-dotted green line on the right is the numerical approximation of the dual problem $\widetilde W_\rho$. }\label{fig:test1error}
\end{figure}

From the results in Table \ref{tab:test2Local} we deduce that (given the choice of $\Delta x$ in relation to $h$)
$$
 |G_\rho^{h,\Delta x}(t,x)|\leq C \left( h + \Delta x^{8/11}\right),
$$
which, combined with \eqref{eq:error_fully} and taking $M=4$, gives the \textit{a posteriori} bounds 
\begin{align}\label{eq:est_Merton}
%-C \Big(h^{(M-1)/2M}+  \Delta x^{(M-1)/(3M-1)} \Big) \leq v(t_n,x_i)-V_F(t_n,x_i)\\ \leq C \left(h + \Delta x^{4/5} + h^{(M-1)/2M}+  \Delta x^{(M-1)/(3M-1)}\right)\nonumber
-C \Big(h^{3/8}+  \Delta x^{3/11} \Big) \leq v(t_n,x_m)-W_\rho(t_n,x_m)\leq C \left(h + \Delta x^{8/11} + h^{3/8}+  \Delta x^{3/11}\right),
\end{align}
which in conclusion is a symmetric bound of order $3/8$  in time and $3/11$ in space.

{
For using our error estimates, it is necessary to solve numerically both the primal and the dual problem. The computational cost for the solution of the dual problem is comparable to that of the primal one, which has the same structure and uses the same scheme. This can be partially observed comparing the CPU times in Table \ref{tab:test1solExLocalN54}  and \ref{tab:test2Local} (however, in this case the dual problem is linear and the computational cost is less than double that of the primal one). }

We illustrate the different contributions to the error, together with the actual error, in Figure \ref{fig:errors}.
%\cred{The figure clearly shows the different convergence orders: order (at least) one for the empirical error and the numerical duality gap, order $1/2$  for the Euler-Maruyama scheme and order $3/8$ for the Gau\ss-Hermite approximation.
%However, the big constants appearing in the theoretical a priori bounds in \eqref{eq:EMest} and \eqref{eq:GHest} make the  magnitude of these theoretical errors significantly far from that of the empirical one.}
The figure shows the order (at least) one for the empirical error and for the numerical duality gap,
as one would have expected from the first order error of the scheme for sufficiently smooth solutions.
We also plot the theoretical error bounds, which hold in the general non-smooth case, for the Euler-Maruyama scheme, given by the expression \eqref{eq:EMest} in the Appendix, of  order $1/2$, and for the Gau\ss-Hermite approximation, from \eqref{eq:GHest}, of order $3/8$. The big constants appearing in the theoretical \textit{a priori} bounds, which are not sharp, put the magnitude of these theoretical errors far from that of the empirical one.

\begin{figure}
\includegraphics[width=0.7\textwidth]{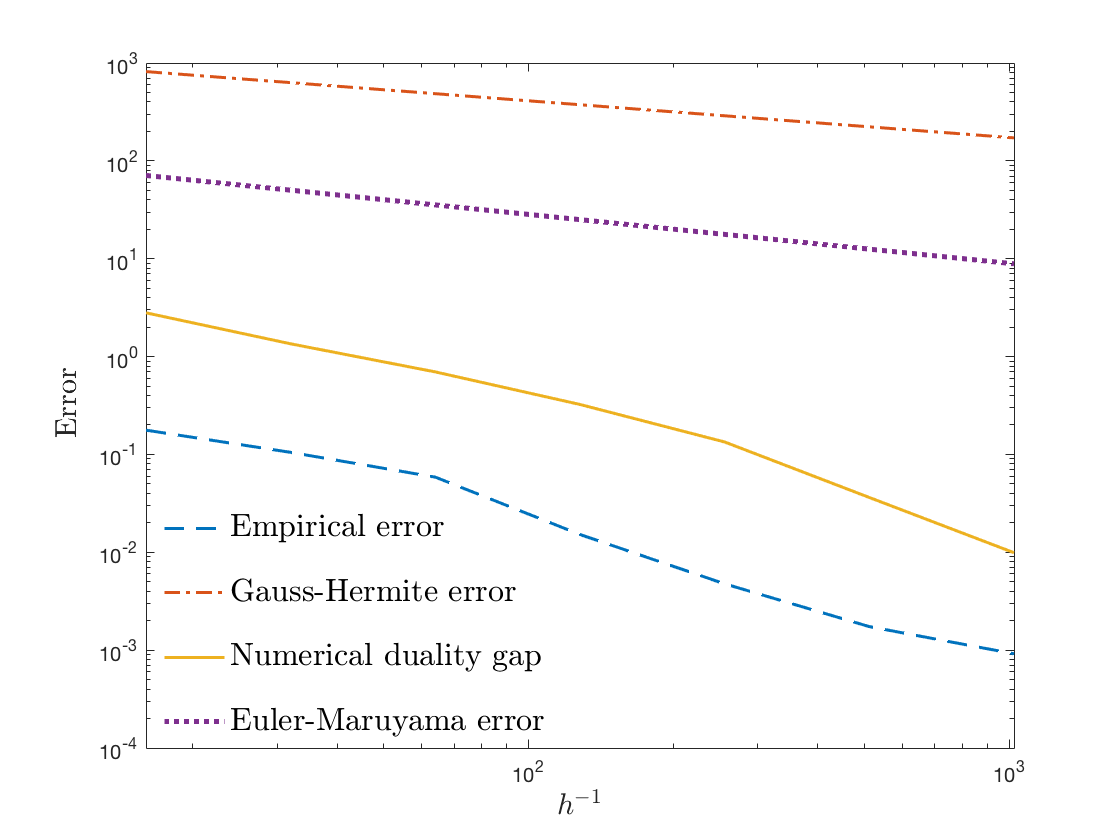}
\caption{Test 1. Local ($x\in [1,2]$) empirical error $|v(0,x)-W_\rho(0,x)|$ as reported in Table \ref{tab:test2Local}, global numerical duality gap $G_\rho^{h,\Delta x}$ reported in Table \ref{tab:test1solExLocalN54}, theoretical error estimate for the Euler-Maruyama and Gau\ss-Hermite approximation given by \eqref{eq:EMest} and \eqref{eq:GHest}, respectively.  }\label{fig:errors}
\end{figure}

For this problem, the optimal control is constant over time, so there is no error coming from the piecewise control approximation and theoretical bounds as those provided by \eqref{eq:EMest} and \eqref{eq:GHest} can be used for both the upper and lower bound. The numerical duality gap in this case contains the sum of the numerical approximation errors for the primal and the dual problem as well as the error coming from the approximation in $\rho$ and the computation of the numerical convex conjugate.

\subsection*{Test 2: Cuoco and Liu example}
This example is taken from \cite{CuoLiu00}. In this paper, the authors consider the nonlinear dynamics in \eqref{eq:SDE} (i.e.\ $g\not\equiv 0$) and portfolio constraints (i.e.\ {$A\subsetneq \R$}). We still consider a power utility and $d=1$. Let $A$ be defined by 
$$
A=\Big\{a\in \R :  \max(0,-a)\lambda_-+\max(0,a)\lambda_+ \leq 1\Big\}
$$
for some $\lambda_-\geq 0$ and $\lambda_+\in [0,1]$.
The function $g$ is defined by 
$$
g(a) = -r (1+ \iota \lambda_{-})\max(0,-a) - (R-r)\big(1-\max(0,a)-\iota \lambda_{-}\max(0,-a)\big),
$$
where $R\geq r$ and $\iota\in [0,1]$. The values used in our numerical simulation are reported in Table \ref{tab:datatest2}. 
\begin{table}%[!h]
\centering
\begin{tabular}{|c|c|c|c|c|c|c|c|c|c|}
\hline 
$p$ & $r$  & $R$ & $b$ & $\sig$ &  $T$  & $x_{\max}$ & $\iota$ & $\lambda_+$ & $\lambda_-$\\
\hline
$0.5$ & $0.8$ & $1$ & $1.2$  & $0.5$ & $0.5$  & $20$ & $0.5$ & $1$ & $1$ \\
\hline
\end{tabular}
\caption{Test 2: Parameters used in numerical experiments.}
\label{tab:datatest2}
\end{table}

Observe that the choice $\lambda_+=\lambda_-=1$ corresponds to $A=[-1,1]$. In order to define $\Gamma$, we use the explicit expression given in \cite[Section 5.2]{CuoLiu00} for the optimal control. For the data in Table \ref{tab:datatest2}, we can take $\Gamma = [-1,1]$ to guarantee $\nu^*_t\in \Gamma$ for any $t\in [0,T]$.
Table \ref{tab:test2GlobalN54} 
% and \ref{tab:test2LocalN54} (local gap)
reports the numerical duality gap and the corresponding convergence order. The numerical solutions $W_\rho$ and $\widetilde W_\rho$ of the primal and the dual problem, together with the convex conjugate of $\widetilde W_\rho$ are shown in Figure \ref{fig:test2gap}. 
%Still we can observe a peak of the gap close to $x=0$, but in this case convergence to zero of the duality gap is globally achieved (see Table \ref{tab:test2GlobalN54}). However for small values of $x$ we loose speed of convergence, which is in line with the local analysis in Section \ref{sect:aPost_errors}.   

\begin{table}
\begin{tabular}{cc|ccccccc}
$J$ & $N$ & Gap $L^1$ & Order $L^1$ & Gap $L^2$ & Order $L^2$ & Gap $L^\infty$ & Order $L^\infty$ & CPU (s)\\
\hline   
   18 &    8  & 2.26E+01 & -  & 7.44E+00 & -  & 3.59E+00 &  - & 0.79\\
46 &   16  & 1.09E+01 &  1.05  & 3.48E+00 &  1.10  & 1.47E+00 &  1.29 & 2.51 \\
118 &   32  & 5.59E+00 &  0.96  & 1.74E+00 &  1.00  & 6.87E-01 &  1.10 & 9.83\\
 305 &   64  & 2.82E+00 &  0.99  & 8.79E-01 &  0.99  & 3.47E-01 &  0.98 & 45.94\\
 790 &  128  & 1.38E+00 &  1.03  & 4.35E-01 &  1.01  & 1.77E-01 &  0.97 & 552.49\\
 2048 &  256  & 5.75E-01 &  1.26  & 1.83E-01 &  1.25  & 7.49E-02 &  1.24 & 6305.33\\
 5312 &  512  & 1.56E-01 &  1.88  & 5.00E-02 &  1.87  & 2.08E-02 &  1.85 & 54006.70
%. SUbmitted SINUM (M=2)
% 14 &    8  & 2.30E+01 & - & 8.08E+00 & -  & 4.10E+00 & -\\ 
%   32 &   16  & 1.55E+01 &  0.57  & 5.09E+00 &  0.66  & 2.14E+00 &  0.94  \\
%   77 &   32  & 7.01E+00 &  1.14  & 2.19E+00 &  1.22  & 9.01E-01 &  1.25  \\
%  182 &   64  & 4.23E+00 &  0.73  & 1.31E+00 &  0.74  & 5.09E-01 &  0.82  \\
%  431 &  128  & 2.49E+00 &  0.76  & 7.80E-01 &  0.75  & 3.21E-01 &  0.67  \\
% 1024 &  256  & 1.43E+00 &  0.80  & 4.51E-01 &  0.79  & 1.83E-01 &  0.81  \\
% 2436 &  512  & 6.33E-01 &  1.18  & 2.00E-01 &  1.18  & 8.05E-02 &  1.18  \\
% 5793 & 1024  & 1.86E-01 &  1.76  & 6.00E-02 &  1.74  & 2.52E-02 &  1.68 \\
  \end{tabular}
\caption{\label{tab:test2GlobalN54}Test 2: Global ($x\in [0,x_{\max}]$) duality gap $G_\rho^{h,\Delta x}$ from (\ref{eqn:gap_fully}
and related convergence order, for $M=4$ (Gau\ss-Hermite quadrature points), $N=4\cdot 2^{k}$ (time steps), $J = \lceil N^{11/8}\rceil $ (space steps), $N_a= 2^{k}+1$ (discrete controls), for $k=1,2,\ldots, 8$.}
\end{table}

%%%%%%%% LOCAL GAP: CAN BE KEPT IN ARXIV%%%%%%%%%%%%%%%
%\begin{table}
%\begin{tabular}{cc|cccccc}
%$J$ & $N$ & Gap $L^1$ & Order $L^1$ & Gap $L^2$ & Order $L^2$ & Gap $L^\infty$ & Order $L^\infty$\\
%\hline   
%  14 &    8  & 4.61E+00 &  - & 3.86E+00 &  -  & 3.23E+00 &  -\\  
%   32 &   16  & 1.84E+00 &  1.33  & 1.66E+00 &  1.21  & 1.68E+00 &  0.94  \\
%   77 &   32  & 5.15E-01 &  1.84  & 5.09E-01 &  1.71  & 5.79E-01 &  1.54  \\
%  182 &   64  & 2.10E-01 &  1.30  & 2.12E-01 &  1.26  & 2.55E-01 &  1.18  \\
%  431 &  128  & 9.71E-02 &  1.11  & 9.72E-02 &  1.13  & 1.20E-01 &  1.10  \\
% 1024 &  256  & 4.88E-02 &  0.99  & 5.00E-02 &  0.96  & 6.58E-02 &  0.86  \\
% 2436 &  512  & 1.56E-02 &  1.65  & 1.63E-02 &  1.62  & 2.47E-02 &  1.41  \\
% 5793 & 1024  & 4.69E-03 &  1.73  & 4.77E-03 &  1.77  & 6.15E-03 &  2.01 \\
%  \end{tabular}
%\caption{\label{tab:test2LocalN54} Test 2: Local ($x\in [1,2]$) duality gap between $V$ and $\inf_y\{\tilde V(t,y)+xy\}$ and related convergence order, for $M=2$ (Gau\ss-Hermite quadrature), $N=4\cdot 2^{k}$ (time steps), $J = \lceil N^{5/4}\rceil $ (space steps), $N_a=N_\nu= 2^{k}+1$ (discrete controls), for $k=1,2,\ldots, 8$.}
%\end{table}
%%%%%%%%%%%%%%%%%%%%%%%%%%%%%%%%%%%%%%%%%%%%%%%%%%%%%

\begin{figure}
\includegraphics[width=0.6\textwidth]{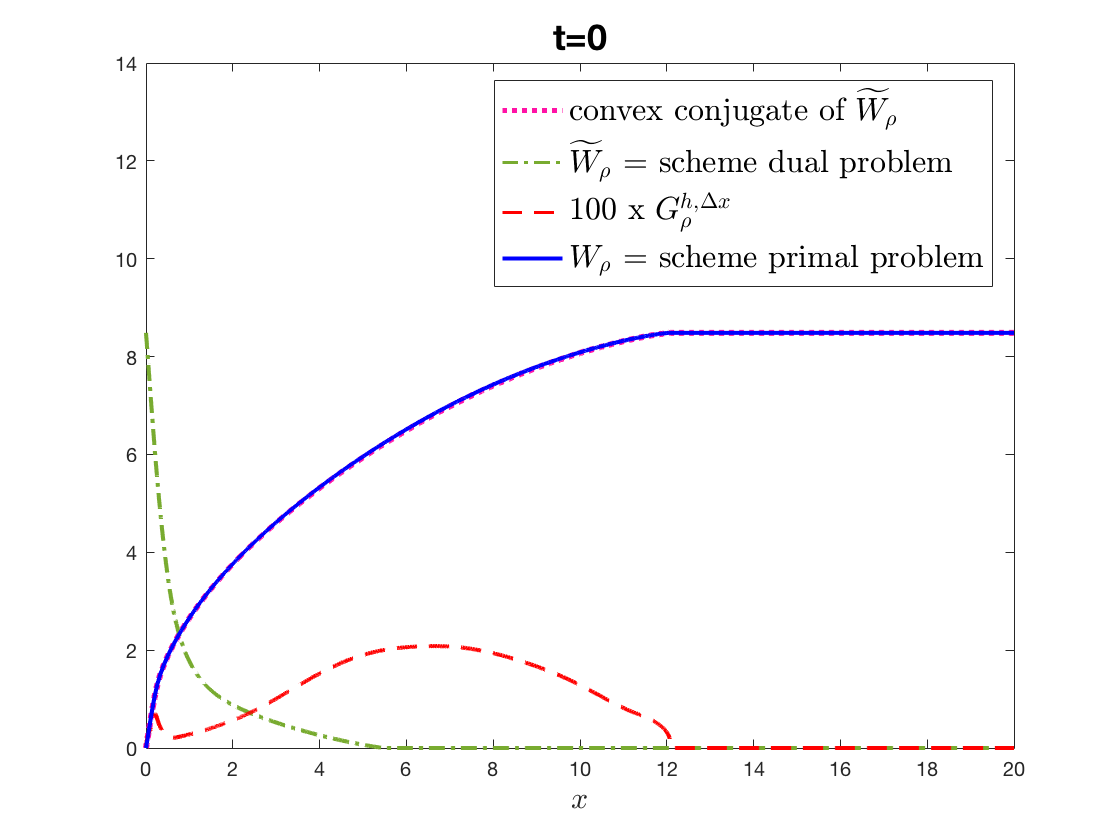}
\caption{Test 2: Numerical solution $W_\rho$ (in solid blue)  compared with the convex conjugate of $\widetilde W_\rho$ (dotted magenta). The  dashed red line represents the numerical duality gap multiplied by a factor 100 and the dash-dotted green line the numerical approximation of the dual problem $\widetilde W_\rho$.}\label{fig:test2gap}
\end{figure}

The results in Table \ref{tab:test2GlobalN54} give once again an estimate of the form 
$$
 |G_\rho^{h,\Delta x}(t,x)|\leq C \left( h + \Delta x^{8/11}\right)
$$
for the duality gap, leading to the \textit{a posteriori} bounds \eqref{eq:est_Merton}.

\section{Conclusion and perspectives} \label{concl}

For a suitable class of convex optimal control problems, we obtained in this paper  \textit{a posteriori} error  bounds using the numerical approximation of a dual problem.

Our numerical tests confirm the results given by the theoretical analysis and suggest a convergence to zero with order one of the numerical duality gap. Establishing rigorously a duality relation between the numerical approximations of the primal and the dual problem seems to us an interesting direction of research that we would like to pursue. Beyond the independent theoretical interest, this would also allow us to obtain an \textit{a priori} upper bound for the numerical error. The possibility of improving the order by higher order time stepping is also left for future research.

\appendix

\section{Explicit computation of the constants}\label{app}
In this section, we explicitly compute the constant $C$ which appears in the lower bound of \eqref{eq:bounds}. Analogous estimates can be used to derive the constant $\widetilde C$ appearing in the upper bound.
In what follows we denote for $t\in [0,T]$, $a\in A$, $x\in \R$:
\begin{align*}
\mu(t,x,a) := \left(r(t) + a^\top \cdot(b(t)-r(t)\mathbbm 1) + g(t,a)\right) x, &\qquad      \psi(t,x,a):=a^\top\sigma(t) x.
%\widetilde \mu(t,x,\gamma) := \left(r(t) - \tilde g(t,\gamma)\right) x, &\qquad      \widetilde \psi(t,x,\gamma):=\sig\sig^T(t))^{-1} (r(t)\mathbbm 1 \!-\! b(t)\!-\!\gamma)\cdot \sig(t) x.
\end{align*}
Let $C_{\mu}, C_{\psi}\geq 0$ such that for $t,s \in [0,T]$, $a\in A$, $x,y\in \R$:
\begin{align*}
|\mu(t,x,a) - \mu(s,y,a)| & \leq C_{\mu} \left(|x-y|+(1+|x|)|t-s|^{1/2}\right), \\
  |\psi(t,x,a) - \psi(s,y,a)| & \leq C_{\psi} \left(|x-y|+(1+|x|)|t-s|^{1/2}\right)
%|\widetilde \mu(t,x,\gamma) - \widetilde\mu(s,y,\gamma)|& \leq  C_{\widetilde \mu} \left(|x-y|+(1+|x|)|t-s|^{1/2}\right)
%\\
% |\widetilde\psi(t,x,\gamma) - \widetilde\psi(s,y,\gamma)| & \leq C_{\widetilde \psi} \left(|x-y|+(1+|x|)|t-s|^{1/2}\right)
\end{align*}
and 
\begin{align*}
|\mu(t,x,a)| \leq C_{\mu}(1+|x|),\quad & \quad |\psi(t,x,a)| \leq C_{\psi}(1+|x|).
%|\widetilde \mu(t,x,\gamma)|  \leq C_{\widetilde \mu}(1+|x|)\quad & \quad  |\widetilde\psi(t,x,\gamma)|\leq C_{\widetilde \psi} (1+|x|).
\end{align*}

\subsection{Explicit bounds for the  Euler-Maruyama approximation}

We consider the Euler-Maruyama approximation given by \eqref{eq:def_M} for $\alpha\equiv(a_0,\ldots, a_{N-1})\in \mathcal A_h$. This leads to the following expression for $\overline X^{t_n,x,\alpha}_{\cdot}$:
$$
\overline X^{t_n,x,\alpha}_{t_k} = x + \sum^{k-1}_{i=n} \int^{t_{i+1}}_{t_i} \mu (t_i,\overline X^{t_n,x,\alpha}_{t_i},a_i) \, \mathrm{d}s + \int^{t_{i+1}}_{t_i} \psi (t_i,\overline X^{t_n,x,\alpha}_{t_i},a_i) \, \mathrm{d}W_s.
$$
Moreover, by the very definition of $X^{t_n,x,\alpha}_{\cdot}$:
$$
 X^{t_n,x,\alpha}_{t_k} = x + \sum^{k-1}_{i=n} \int^{t_{i+1}}_{t_i} \mu (s,X^{t_n,x,\alpha}_{s},a_i) \, \mathrm{d}s + \int^{t_{i+1}}_{t_i} \psi (s,X^{t_n,x,\alpha}_{s},a_i) \, \mathrm{d}W_s.
$$
Therefore, using the Cauchy-Schwartz inequality and It{\^o} isometry together with classical estimates, one has 
\begin{align*}
\mathbb E\left[ |\overline X^{t_n,x,\alpha}_{t_k} - X^{t_n,x,\alpha}_{t_k}|^2\right] &  \leq 2 T \sum^{k-1}_{i=n}  \E\left[ \int^{t_{i+1}}_{t_i} \left|\mu (t_i,\overline X^{t_n,x,\alpha}_{t_i},a_i) - \mu (s, X^{t_n,x,\alpha}_{s},a_i)\right|^2 \mathrm{d}s\right] \\
& + 2 \sum^{k-1}_{i=n}\E\left[\int^{t_{i+1}}_{t_i} \left|\psi (t_i,\overline X^{t_n,x,\alpha}_{t_i},a_i) - \psi (s,X^{t_n,x,\alpha}_{s},a_i)\right|^2\mathrm{d}s\right]\\
& \leq 8 K_1 h \sum^{k-1}_{i=n} \left( \mathbb E\left[ \left| \overline X^{t_n,x,\alpha}_{t_i} - X^{t_n,x,\alpha}_{t_i}\right|^2\right]  + h + h\mathbb E\left[\underset{s\in [t_i,t_{i+1}]}\sup \left| X^{t_n,x,\alpha}_{s}\right|^2 \right]\right.\\
& \quad+ \left.  \mathbb E\left[ \underset{s\in [t_i,t_{i+1}]}\sup\left| X^{t_n,x,\alpha}_{s} - X^{t_n,x,\alpha}_{t_i}\right|^2\right]  \right),
\end{align*}
where we denoted  $K_1:=  (C_{\mu}^2 T +C_{\psi}^2)$. 
By classical estimates on the process $X^{t_n,x}_\cdot$ and denoting $K_2(\xi):= ( C^2_\mu \xi +4 C^2_\psi) $, one has 
\begin{align*}
\mathbb E\left[ \underset{s\in [t_i,t_{i+1}]}\sup \left| X^{t_n,x,\alpha}_{s}\right|^2 \right]& \leq 3\left(|x|^2 + 2K_2(T) \right) e^{6K_2(T) T}, \\
\mathbb E\left[\underset{s\in [t_i,t_{i+1}]}\sup\left| X^{t_n,x,\alpha}_{s} - X^{t_n,x,\alpha}_{t_i}\right|^2\right]& \leq  4 K_2(h) h \left(1+ 3\left(|x|^2 + 4K_2(T) \right) e^{ 6 K_2(T) T}\right).
\end{align*}
Putting these estimates together:
\begin{align*}
\mathbb E\left[ |\overline X^{t_n,x,\alpha}_{t_k} - X^{t_n,x,\alpha}_{t_k}|^2\right] & \leq 8 K_1 h \sum^{k-1}_{i=n} \mathbb E\left[ \left| \overline X^{t_n,x,\alpha}_{t_i} - X^{t_n,x,\alpha}_{t_i}\right|^2\right] 
 +  8 K_1 T h (1+ 2K_2(h)) (1+K_3 (x))
\end{align*}
with $K_3(x):= 3 \left(|x|^2 + 2K_2(T) T\right) e^{6K_2(T) T} $, so that, using Gronwall's lemma, one obtains
\begin{align*}
\mathbb E\left[ |\overline X^{t_n,x,\alpha}_{t_k} - X^{t_n,x,\alpha}_{t_k}|^2\right] & \leq  8 K_1 h (1+ 2K_2(h)) (1+K_3 (x))\left(1+e^{\sum^{k-1}_{i=n} 8 K_1 h}  \left(\sum^{k-1}_{i=n} 8 K_1 h \right)\right) \\
&\leq 8 K_1 h (1+ 2 K_2(h)) (1+K_3 (x))\left(1+ 8 K_1 T e^{ 8 K_1 T}  \right).
\end{align*}
Using the Lipschitz continuity of $U$, one has 
$$
\left|\underset{\alpha\in\mathcal A_h}\sup \mathbb E\left[ | U(\overline X^{t_n,x,\alpha}_{T})\right]  -\underset{\alpha\in\mathcal A_h}\sup\mathbb E \left[U(X^{t_n,x,\alpha}_{T})\right]\right| \leq L \underset{\alpha\in\mathcal A_h}\sup  \mathbb E\left[ |\overline X^{t_n,x,\alpha}_{T} - X^{t_n,x,\alpha}_{T}|\right]. 
$$
In conclusion, the contribution to the error coming from the Euler-Maruyama approximation can be bounded by 
$$
L  \Big(8 K_1(1+ 2K_2(h)) (1+K_3 (x))(1+ 8 K_1 T e^{ 8 K_1 T} ) \Big)^{1/2}\,h^{1/2}.
$$
%Of course, a similar estimate holds for the dual problem with constants depending on $C_{\widetilde \mu}$ and $C_{\widetilde \psi}$ instead of $C_{\mu}$ and $C_{\psi}$.

For a linear (in the state), time independent dynamics as the one considered in Section \ref{sec:num}, one simply has 
\begin{align*}
|\mu(x,a) - \mu(y,a)|  \leq C_{\mu} |x-y|,\quad  & \quad|\psi(x,a) - \psi(y,a)|  \leq C_{\psi} |x-y|
%|\widetilde \mu(x,\gamma) - \widetilde\mu(y,\gamma)| \leq  C_{\widetilde \mu} |x-y|, \quad & \quad
% |\widetilde\psi(x,\gamma) - \widetilde\psi(y,\gamma)|  \leq C_{\widetilde \psi} |x-y|, 
\end{align*}
and 
\begin{align*}
|\mu(x,a)| \leq C_{\mu}|x|, \quad & \quad |\psi(x,a)| \leq C_{\psi}|x|.
%|\widetilde \mu(x,\gamma)|  \leq C_{\widetilde \mu}|x|, \quad & \quad  |\widetilde\psi(x,\gamma)|\leq C_{\widetilde \psi}|x|.
\end{align*}
It is possible to verify that this leads to 
\begin{align*}
\mathbb E\left[ |\overline X^{t_n,x,\alpha}_{t_k} - X^{t_n,x,\alpha}_{t_k}|^2\right] 
& \leq 4 K_1 h \sum^{k-1}_{i=n} \left( \mathbb E\left[ \left| \overline X^{t_n,x,\alpha}_{t_i} - X^{t_n,x,\alpha}_{t_i}\right|^2\right]  +   \mathbb E\left[ \underset{s\in [t_i,t_{i+1}]}\sup\left| X^{t_n,x,\alpha}_{s} - X^{t_n,x,\alpha}_{t_i}\right|^2\right]  \right)\\
& \leq 4 K_1 h \sum^{k-1}_{i=n} \left( \mathbb E\left[ \left| \overline X^{t_n,x,\alpha}_{t_i} - X^{t_n,x,\alpha}_{t_i}\right|^2\right]  +   2 K_2(h)h\mathbb E\left[ \underset{s\in [t_i,t_{i+1}]}\sup\left| X^{t_n,x,\alpha}_{s} \right|^2\right]  \right)
\end{align*}
with 
$$
\mathbb E\left[ \underset{s\in [t_i,t_{i+1}]}\sup\left| X^{t_n,x,\alpha}_{s} \right|^2\right] \leq 3 |x|^2 e^{3 K_2(T)T} .
$$
Neglecting the infinitesimal terms,  one has
\begin{align*}
\mathbb E\left[ |\overline X^{t_n,x,\alpha}_{t_k} - X^{t_n,x,\alpha}_{t_k}|^2\right] 
& \leq 4 K_1 h \left(\sum^{k-1}_{i=n}  \mathbb E\left[ \left| \overline X^{t_n,x,\alpha}_{t_i} - X^{t_n,x,\alpha}_{t_i}\right|^2\right]  +   24T C_\psi^2 |x|^2e^{3 K_2(T)T}\right)\end{align*}
which leads to the sharper error estimate 
\beno
L   \left( 96 K_1 T C_\psi^2 |x|^2e^{3 K_2(T)T} \left( 1+ 4K_1 T e^{4K_1 T}\right)\right)^{1/2} \,h^{1/2}.
\eeno
In the estimates plotted in Section \ref{sec:num}, we consider
\be\label{eq:EMest}
L  \left( 24 K_1 T C_\psi^2 |x|^2 \left( 1+ 4K_1 T e^{4K_1 T}\right)\right)^{1/2} \,h^{1/2}
\ee
since we can approximate the second order moment of $X_\cdot $ by $x^2$ for  a local error.

\subsection{Explicit bounds for the  Gau\ss-Hermite approximation }
We consider the case of a one-dimensional Brownian motion.
Given a function $f\in C^{2M}(\R)$, the analysis in \cite[Proposition 3.2]{PicaReisi18_prob} shows that 

\begin{align*}
& \Big|\;\E_{t_n,x}\Big[f(\overline X^{a}_{t_{n+1}})\Big]-\E_{t_n,x}\Big[f(\widehat X^{a}_{t_{n+1}})\Big]\;\Big|\\
& \leq \Big |\int^{+\infty}_{-\infty}\frac{f^{(2M)}(\hat z)}{(2M)!}(\sqrt{2h}\psi(t_n,x,a)y)^{2M} \frac{e^{-y^2}}{\sqrt{\pi}}dy-\sum^M_{i=1}\lambda_i\frac{f^{(2M)}(\tilde z)}{(2M)!}(\sqrt{h}\psi(t_n,x,a) \xi_i)^{2M}\Big|\\
& \leq 2\|f^{2M}\|_\infty \frac{(2h)^M}{2M!} (\psi(t_n,x,a))^{2M} \int^\infty_{-\infty} y^{2M} \frac{e^{-y^2}}{\sqrt{\pi}} \mathrm{d}y\\
& \quad+ \|f^{2M}\|_\infty \frac{h^M}{2M!} (\psi(t_n,x,a))^{2M}\left| 2^M \int^\infty_{-\infty} y^{2M} \frac{e^{-y^2}}{\sqrt{\pi}} \mathrm{d}y - \sum^{M}_{i=1}\lambda_i{\xi_i}^{2M} \right|\\
& \leq \|f^{2M}\|_\infty \frac{h^M}{2M!} C_{\psi}^{2M}(1+|x|)^{2M}\left( 2(2M-1)!! + \Big|  (2M-1)!! - \sum^{M}_{i=1}\lambda_i {\xi_i}^{2M} \Big|\right),
\end{align*}
where in the last inequality we have used  that 
$$
2^M \int^\infty_{-\infty} y^{2M} \frac{e^{-y^2}}{\sqrt{\pi}} \mathrm{d}y = (2M-1)!!
$$
The estimate above corresponds to the error associated with the Gau\ss-Hermite approximation  at each time step, i.e.\ considering the error at time $t_{n+1}$ starting from $t_n$.
Our scheme being iterative in time, the overall contribution to the error will be 
$$
\left\|\frac{\partial  f}{\partial x^{2M}}\right\|_\infty \frac{h^{M-1}}{2M!} 2^{2M-1} C_{\psi}^{2M} \left( (2M-1)!! + \Big|  (2M-1)!! - \sum^{M}_{i=1}\frac{\omega_i}{\sqrt\pi} {z_i}^{2M} \Big|\right) \Big(1 + \sup_{\substack{\alpha\in\mathcal A_h\\ k=n\dots N}}\mathbb E_{t_n,x}\left[(\widehat X^{\alpha}_{t_k})^{2M}\right]\Big),
$$
where we also used the classical inequality $|a+b|^{2M}\leq 2^{2M-1} (|a|^{2M} + |b|^{2M})$.
It remains to estimate $\mathbb E_{t_n,x}\left[ (\widehat X^{\alpha}_{t_k})^{2M}\right]$. 
By the recursive definition of $\widehat X$, one has for any $k=n,\ldots, N$ 
\begin{align*}
\E_{t_n,x}\left[ (\widehat X^{\alpha}_{t_{k+1}})^{2M}\right] & = \E_{t_n,x}\left[ \left(  \widehat X^{\alpha}_{t_k} + h \mu(t_k, \widehat X_k,a_k) +\sqrt{h} \psi(t_k,\widehat X_k,a_k) \zeta_k\right)^{2M} \right]\\
& =  \E_{t_n,x}\left[ \sum^{2M}_{i=0} \sum^{i}_{j=0} {2M\choose i} {i\choose j} h^{i-j} (\widehat X^{\alpha}_{t_k})^{j} (\mu(t_k, \widehat X_k,a_k) )^{i-j} \left(\sqrt{h} \psi(t_k,\widehat X_k,a_k) \zeta_k\right)^{2M-i}\right]\\
& =  \E_{t_n,x}\left[ \sum^{M}_{i=0} \sum^{2i}_{j=0} {2M\choose 2i} {2i\choose j} h^{2i-j} (\widehat X^{\alpha}_{t_k})^{j} (\mu(t_k, \widehat X_k,a_k) )^{2i-j} \left(\sqrt{h} \psi(t_k,\widehat X_k,a_k) \zeta_k\right)^{2M-2i}\right],
\end{align*}
where the last equality follows observing that $\E[(\ldots) \zeta_k^{2j+1}]=0$ for $j=0,\ldots,M-1$ for any quantity, represented by ``$(\ldots)$", independent of $\zeta_k$. Therefore, thanks to the linear growth of $\mu$ and $\psi$ (taking for simplicity $C_1:=\max(C_\mu,C_\psi)$):
\begin{align*}
\E_{t_n,x}\left[ (\widehat X^{\alpha}_{t_{k+1}})^{2M}\right] &  =  \E_{t_n,x}\left[ \sum^{M}_{i=0} \sum^{2i}_{j=0} {2M\choose 2i} {2i\choose j} h^{M+i-j} C_1^{2M-j}  (\widehat X^{\alpha}_{t_k})^{j} (1+|\widehat X^{\alpha}_{t_k}|)^{2i-j} \zeta_k^{2M-2i}\right]\\
 &  \leq  \sum^{M}_{i=0} \sum^{2i}_{j=0} {2M\choose 2i} {2i\choose j} h^{M+i-j} C_1^{2M-j}  \E_{t_n,x}\left[ |\widehat X^{\alpha}_{t_k}|^{j} (1+|\widehat X^{\alpha}_{t_k}|)^{2i-j}  \right]\E\left[\zeta_k^{2M-2i}\right]\\
 & = \E_{t_n,x}\left[(\widehat X^{\alpha}_{t_k})^{2M}\right] +  \sum^{2M-1}_{j=0} {2M\choose j} h^{2M-j} C_1^{2M-j}  \E_{t_n,x}\left[ |\widehat X^{\alpha}_{t_k}|^{j} (1+|\widehat X^{\alpha}_{t_k}|)^{2M-j}  \right]\\
 & \quad +  \sum^{M-1}_{i=0} \sum^{2i}_{j=0} {2M\choose 2i} {2i\choose j} h^{M+i-j} C_1^{2M-j}  \E_{t_n,x}\left[ |\widehat X^{\alpha}_{t_k}|^{j} (1+|\widehat X^{\alpha}_{t_k}|)^{2i-j}  \right]\E\left[\zeta_k^{2M-2i}\right].
\end{align*}
For $0\leq i\leq M$ and $0\leq j\leq 2i$, one has 
$$
\E_{t_n,x}\left[ |\widehat X^{\alpha}_{t_k}|^{j} (1+|\widehat X^{\alpha}_{t_k}|)^{2i-j}  \right]\leq  2^{2i} \left( 1+ \E_{t_n,x}\left[ (\widehat X^{\alpha}_{t_k})^{2M}\right]\right).
$$
This gives:
\begin{align*}
\E_{t_n,x}\left[ (\widehat X^{\alpha}_{t_{k+1}})^{2M}\right] &  \leq  \E_{t_n,x}\left[(\widehat X^{\alpha}_{t_k})^{2M}\right]  + 
\left( 1+ \E_{t_n,x}\left[ (\widehat X^{\alpha}_{t_k})^{2M}\right]\right) h \left\{ \sum^{2M-1}_{j=0} {2M\choose j} h^{2M-j-1} C_1^{2M-j} 2^{2M} 
 \right. \\
 & \quad \left.+  \sum^{M-1}_{i=0} \sum^{2i}_{j=0} {2M\choose 2i} {2i\choose j} h^{M+i-j-1} C_1^{2M-j} 2^{2i}\E\left[\zeta_k^{2M-2i}\right]\right\}.
\end{align*}
Neglecting the infinitesimal terms and denoting
$$
K_4:=  2M C_1 2^{2M} +  \frac{2M(2M-1)}{2}C_1^{2} 2^{2M-2},
$$
we have 
$$
\E_{t_n,x}\left[ (\widehat X^{\alpha}_{t_{k+1}})^{2M}\right] \leq (1+ K_4 h) \E_{t_n,x}\left[ (\widehat X^{\alpha}_{t_{k}})^{2M}\right] + K_4 h.
$$
Iterating, this leads to 
$$
\E_{t_n,x}\left[ (\widehat X^{\alpha}_{t_{k+1}})^{2M}\right] \leq (1+K_4 h)^k x^{2M} + k h K_4
$$
for any $n\leq k \leq N-1$, with $K_4$ not depending on $k$ and $\alpha\in\mathcal A_h$. Therefore, we can conclude that
$$
\sup_{\substack{\alpha\in\mathcal A_h\\ k=n\ldots N}}\E_{t_n,x}\left[ (\widehat X^{\alpha}_{t_{k}})^{2M}\right] \leq x^{2M} e^{K_4 T} + K_4 T.
$$
To avoid an exponential growth in $M$ of the constants and motivated by the fact that in Section \ref{sec:num} we empirically computed a local error, we can strongly simplify our estimates by approximating
$$
\sup_{\substack{\alpha\in\mathcal A_h\\ k=n\ldots N}} \E_{t_n,x}\left[ (\widehat X^{\alpha}_{t_{k}})^{2M}\right]\approx x^{2M}.
$$
The presence of the $2M$-th derivative in the error bound requires to pass by  a mollification of the original value function. For a given regularization parameter $\varepsilon$ and mollified value function $v_\varepsilon$ it is possible to show that an estimate of the form 
$$
\left\|\frac{\partial^{2M} v_\varepsilon}{\partial x^{2M}}\right\|_\infty \leq L K_5 \varepsilon^{1-2M}
$$
 holds with $K_5:= ( 3 + 9K_1T e^{ 3K_1 T})^{1/2}$. The balancing between the  Gau\ss-Hermite and regularization error (the last one giving an extra error term of order $\varepsilon$) leads to the choice of optimal order $\varepsilon = h^{ (M-1)/2M}$.
Therefore, we get
\be\label{eq:GHest}
L K_5 h^{(M-1)/2M}\frac{2^{2M-1}}{2M!}  C_{\psi}^{2M} \left( (2M-1)!! + \Big|  (2M-1)!! - \sum^{M}_{i=1}\frac{\omega_i}{\sqrt\pi} {z_i}^{2M} \Big|\right) (1+x^{2M}).
\ee

\bibliography{biblio.bib}
\bibliographystyle{plain}

%M=    4, N=    4, Dx= 2.50000, Dt= 0.12500
%M=   16, N=    8, Dx= 0.62500, Dt= 0.06250
%   16 &    8  & 3.77E+00 & -0.55  & 1.79E+00 &  0.04  & 1.07E+00 &  0.71
%M=   64, N=   16, Dx= 0.15625, Dt= 0.03125
%   64 &   16  & 1.11E+00 &  1.77  & 5.05E-01 &  1.83  & 2.95E-01 &  1.86
%M=  256, N=   32, Dx= 0.03906, Dt= 0.01562
%  256 &   32  & 2.38E-01 &  2.22  & 1.07E-01 &  2.24  & 6.19E-02 &  2.25
%M= 1024, N=   64, Dx= 0.00977, Dt= 0.00781
% 1024 &   64  & 6.78E-02 &  1.81  & 3.02E-02 &  1.82  & 2.05E-02 &  1.59
%M= 4096, N=  128, Dx= 0.00244, Dt= 0.00391
% 4096 &  128  & 2.60E-02 &  1.38  & 1.17E-02 &  1.37  & 9.83E-03 &  1.06
%M=16384, N=  256, Dx= 0.00061, Dt= 0.00195

\end{document}